\definecolor{colorcita}{RGB}{21,86,130}
\definecolor{colorref}{RGB}{5,10,177}
\definecolor{colorweb}{RGB}{177,6,38}
\numberwithin{subsection}{section}
\newtheorem{theorem}{Theorem}[section]
\newtheorem{proposition}[theorem]{Proposition}
\newtheorem{corollary}[theorem]{Corollary}
\theoremstyle{definition}
\newtheorem{remark}[theorem]{Remark}
\newtheorem{example}[theorem]{Example}
\theoremstyle{remark}
\newcommand{\dis}{\displaystyle}
\DeclareMathOperator{\mon}{mon}
\DeclareMathOperator{\re}{Re}
\DeclareMathOperator{\id}{\mathrm{id}}
\newcommand{\TT}{\mathbb T}
\newcommand{\chimon}{\chi_{\mon}}
\begin{document}
\title{Projection constants for spaces of Dirichlet polynomials}

\author[Defant]{A.~Defant}
\address{%
Institut f\"{u}r Mathematik,
Carl von Ossietzky Universit\"at,
26111 Oldenburg,
Germany}
\email{defant@mathematik.uni-oldenburg.de}

\author[Galicer]{D.~Galicer}
\address{Departamento de Matem\'{a}tica,
Facultad de Cs. Exactas y Naturales, Universidad de Buenos Aires and IMAS-CONICET. Ciudad Universitaria, Pabell\'on I
(C1428EGA) C.A.B.A., Argentina}
\email{dgalicer$@$dm.uba.ar}

\author[Mansilla]{M.~Mansilla}
\address{Departamento de Matem\'{a}tica,
Facultad de Cs. Exactas y Naturales, Universidad de Buenos Aires and IAM-CONICET. Saavedra 15 (C1083ACA) C.A.B.A., Argentina}
\email{mmansilla$@$dm.uba.ar}

\author[Masty{\l}o]{M.~Masty{\l}o}
\address{Faculty of Mathematics and Computer Science, Adam Mickiewicz University, Pozna{\'n}, Uniwersytetu \linebreak 
 Pozna{\'n}skiego 4,
61-614 Pozna{\'n}, Poland}
\email{mieczyslaw.mastylo$@$amu.edu.pl}

\author[Muro]{S.~Muro}
\address{FCEIA, Universidad Nacional de Rosario and CIFASIS, CONICET, Ocampo $\&$ Esmeralda, S2000 Rosario, Argentina}
\email{muro$@$cifasis-conicet.gov.ar}

\date{}

\thanks{The research of the fourth author was supported by the National Science Centre (NCN), Poland,
Project 2019/33/B/ST1/00165; the second, third and fifth author were supported by CONICET-PIP 11220200102336
and PICT 2018-4250. The research of the fifth author is additionally supported by ING-586-UNR}

\begin{abstract}
\noindent
Given a frequency sequence $\omega=(\omega_n)$ and a finite subset  $J \subset \mathbb{N}$, we study
the space $\mathcal{H}_{\infty}^{J}(\omega)$ of all Dirichlet polynomials
$D(s) := \sum_{n \in J} a_n e^{-\omega_n s}, \, s \in \mathbb{C}$. The main aim is to prove asymptotically correct estimates for the
projection constant $\boldsymbol{\lambda}\big(\mathcal{H}_\infty^{J}(\omega) \big)$ of the finite
dimensional Banach space $\mathcal{H}_\infty^{J}(\omega)$ equipped with the norm
$\|D\|= \sup_{\text{Re}\,s>0} |D(s)|$. Based on harmonic analysis on
$\omega$-Dirichlet groups, we prove
the formula $
\boldsymbol{\lambda}\big(\mathcal{H}_\infty^{J}(\omega) \big) ~
= ~ \dis\lim_{T \to \infty} \frac{1}{2T} \int_{-T}^T \Big|\sum_{n \in J} e^{-i\omega_n t}\Big|\,dt\,,
$
and apply it   to various concrete frequencies $\omega$ and index sets $J$. To see an example, combining
 with a~recent deep result of Harper from probabilistic analytic number theory, we for the space
$\mathcal{H}_\infty^{\leq x}\big( (\log n)\big)$ of all ordinary Dirichlet polynomials
$D(s) = \sum_{n \leq x} a_n n^{-s}$ of length $x$ show   the  asymptotically correct order
$
\boldsymbol{\lambda}\big(\mathcal{H}_\infty^{\leq x}\big( (\log n)\big)\big)
\sim \sqrt{x}/(\log \log x)^{\frac{1}{4}}.
$
\end{abstract}

\subjclass[2020]{Primary: 30B50, 43A77, 46B06.
Secondary:  46B07; 46B07; 46G25.}

\keywords{Spaces of Dirichlet series, Projection constants}
\maketitle


\section*{Introduction}
\label{Introduction}

The study of complemented subspaces of a Banach space and their projection constants  has a~long history going  back to the beginning of operator theory in Banach spaces. Recall that if $X$ is 
a~closed subspace of a~Banach space $Y$, then the relative projection constant of $X$ in $Y$ is defined by

\[
\boldsymbol{\lambda}(X, Y) = \inf\big\{\|P\|: \,\, P\in \mathcal{L}(Y, X),\,\, P|_{X} = \id_X\big\}\,,
\]
where $\id_X$ is the identity operator on $X$ and as usual $\mathcal{L}(U, V)$ denotes the Banach space of all bounded linear operators between the Banach spaces $U$ and $V$ with the uniform norm. We use here the convention that $\inf\emptyset = \infty$.

The following straightforward result shows the intimate link  between projection constants and extensions of linear operators:
For every Banach space $Y$ and its subspace $X$ one has
\begin{align*}
\boldsymbol{\lambda}(X, Y) = \inf\big\{c>0: \,\,\text{$\forall\, T \in \mathcal{L}(X, Z)$ \,\, $\exists$\,\, an extension\,
$\widetilde{T}\in \mathcal{L}(Y, Z)$\, with $\|\widetilde{T}\| \leq c\,\|T\|$}\big\}\,,
\end{align*}
where $Z$ is any Banach space.
Moreover, the (absolute) projection constant of a Banach space $X$ is given by
\[
\boldsymbol{\lambda}(X) := \sup \,\,\boldsymbol{\lambda}(I(X),Y)\,,
\]
where the supremum is taken over all Banach spaces $Y$ for which it exists some isometric embedding $I\colon X \to Y$ such that $I(X)$ is complemented in $Y$.

A frequency  $\omega = (\omega_n)_{n\in \mathbb{N}}$  is a~strictly increasing,
non-negative real sequence such that $\omega_n \to \infty$ as $n\to \infty$. Given a~finite index set $J \subset \mathbb{N}$
and complex numbers $(a_n)_{n\in J}$,  we say that
\[
D(s) := \sum_{n \in J} a_n e^{-\omega_n s}, \quad\, s \in \mathbb{C}
\]
is a $\omega$-Dirichlet polynomial supported on  the index set $J$. For the frequency $\omega = (n)_{n \in \mathbb{N}}$
we obtain (after the substitution $z = e^{-s}$) polynomials $\sum_{n \in J} a_n z^n$ in one complex variable, and in
the case $\omega = (\log n)_{n \in \mathbb{N}}$ all ordinary Dirichlet polynomials $\sum_{n \in J} a_n n^{-s}$.

Denote by $\mathcal{H}_\infty^{J}(\omega)$ the (finite dimensional) Banach space of all $\omega$-Dirichlet polynomials
supported on the finite index subset  $J \subset \mathbb{N}$, endowed with the norm
\begin{equation*}\label{norm-di}
\|D\|_\infty :=  \sup_{t \in \mathbb{R}} \Big|\sum_{n \in J} a_n e^{-i\omega_n t}\Big|
= \sup_{\re s >0 } \Big|\sum_{n \in J} a_n e^{-\omega_n s}\Big|\,,
\end{equation*}
where the last equality is a simple consequence of the maximum modulus principle.

Then the main goal of this article
is to study the projection constant $$\boldsymbol{\lambda}\big(\mathcal{H}_\infty^{J}(\omega)\big)$$ for various 'natural'
frequencies $\omega$ and various 'natural' finite index sets  $J$ of $\mathbb{N}$. Given $x \in \mathbb{N}$, we are
particularly interested  in the projection constant of the Banach space
\[
\mathcal{H}_\infty^{\leq x}(\omega) = \mathcal{H}_\infty^{\{n\in \mathbb{N}\colon n \leq x\}}(\omega)\,,
\]
so all $\omega$-Dirichlet polynomials $D(s) = \sum_{n \leq x} a_n e^{-\omega_n s}$ of length $x$.

Before we illustrate some of our main results, let us pause for a~moment to say a~few words about the modern theory of
Dirichlet series. Within the last two decades, the theory of ordinary Dirichlet series $\sum a_{n} n^{-s}$ experienced
a~kind of renaissance. The study of these series in fact was one of the hot topics in mathematics at the beginning of
the 20th century.  Among others, H.~Bohr, Besicovitch, Bohnenblust, Hardy, Hille, Landau, Perron, and M.~Riesz.

However, this research took place before the modern interplay between function theory and functional analysis, as well as the
advent of the field of several complex variables, and the area was in many ways dormant until the late 1990s. One of the main
goals of the 1997 paper of Hedenmalm, Lindqvist, and Seip \cite{hedenmalm1997hilbert} was to initiate a~systematic study of
Dirichlet series from the point of view of modern operator-related function theory and harmonic analysis. Independently, at the
same time, a~paper of Boas and Khavinson \cite{bohnenblust1931absolute} attracted renewed attention, in the context of several
complex variables, to the original work of Bohr.

A new field emerged intertwining the classical work in novel ways with modern functional analysis, infinite dimensional holomorphy,
probability theory as well as analytic number theory. As a~consequence, a~number of  challenging research problems crystallized
and were solved over the last decades. We refer to the monographs \cite{defant2019libro}, \cite{HelsonBook}, and
\cite{queffelec2013diophantine}, where many   of the   key elements of this new developments for ordinary Dirichlet series are
described in detail.

Contemporary research in this field owes much to the following fundamental observation of H.~Bohr \cite{bohr1913ueber}, sometimes
called Bohr's vision:

By the transformation $z_j=\mathfrak{p}_j^{-s}$  and the fundamental theorem of arithmetics, an ordinary Dirichlet series
$\sum a_{n} n^{-s}$  may be thought of as a~function $\sum_{\alpha \in \mathbb{N}_0^{(\mathbb{N})}} a_{\mathfrak{p}^\alpha} z^{\alpha}$
of infinitely many complex variables $z_1, z_2, \ldots $, where $\mathfrak{p} =(\mathfrak{p}_j)$ stands for the sequence of prime
numbers. By a~classical approximation theorem of Kronecker, this is more than just a~formal transformation: If, say, only a finite number
of the coefficients $a_n$ are nonzero (so that questions about convergence of the series are avoided), the supremum of the Dirichlet
polynomial $\sum a_n n^{-s}$ in the half-plane $\operatorname{Re} s>0$ equals the supremum of the corresponding polynomial on the
infinite-dimensional circle group  $\mathbb{T}^\infty$. Notice that a Dirichlet polynomial 
$ \sum_{n =1}^x a_n n^{-s}  $  
corresponds to a polynomial 
$ \sum_{\alpha \in \mathbb{N}_0^{(\mathbb{N})}} a_{\mathfrak{p}^\alpha} z^{\alpha} $  on the finite dimensional polytorus $\TT^{\pi(x)}$, where $\pi(x)$ is the amount of prime numbers less or equal $x$. 
Thus, the supremum can be taken over $\TT^{\infty}$ or $\TT^{\pi(x)}$ indistinctly.

Let us sketch some of our main results, as well as some ideas for the strategies how to derive them.

Bohr's vision in its original form is the  seed which allows  to associate with each frequency $\omega$ a~compact
abelian group $G$ (a so-called $\omega$-Dirichlet group) as well as a sequence of characters $h_{\omega_n}$ on $G$,
such that for each finite set $J \subset \mathbb{N}$  the mapping
$
\sum_J a_n e^{-\omega_n s} \mapsto \sum_J a_n h_{\omega_n}
$
leads to a~coefficient preserving identification of $\mathcal{H}_\infty^{J}(\omega)$ with the Banach space
$\text{Trig}_J(G)$ of all trigonometric polynomials on $G$ having Fourier coefficients supported in
$\{h_{\omega_n} \colon n \in J\}$ (see Section~\ref{Bohr's vision}).

Using a famous averaging technique of Rudin (Theorem~\ref{C(G)proj} and Theorem~\ref{rudy}), we derive
the following integral formula (see Theorem~\ref{main-dirB})
\begin{align*}
\boldsymbol{\lambda}\big(\mathcal{H}_\infty^{J}(\omega)\big)
=\int_G \Big|  \sum_{n \in J} h_{\omega_n}\Big|\,d\mathrm{m}\,,
\end{align*}
where $\mathrm{m}$ stands for the Haar measure on $G$, and then equivalently 
\begin{align*}
\boldsymbol{\lambda}\big(\mathcal{H}_\infty^{J}(\omega)\big)
=\lim_{T \to \infty} \frac{1}{2T} \int_{-T}^T \Big|\sum_{n \in J} e^{-i\omega_n t}\Big|\,dt
\end{align*}
(Theorem~\ref{main-dir}).
Our main applications for concrete frequencies $\omega$ and  concrete index sets $J$ offer concrete  estimates
for $\boldsymbol{\lambda}\big(\mathcal{H}_\infty^{J}(\omega)\big)$, and their proofs are all mainly based on caculating one of the  preceding two integrals.

For the three standard frequencies
$\omega = (n)_{n \in \mathbb{N}_0}$, $\omega = (\log p_n)_{n \in \mathbb{N}}$ (where as above $p_n$ is the $n$th prime number), and $\omega = (\log n)_{n \in \mathbb{N}}$, we in Section~\ref{Projection constants II}, Case I, get the following formulas which are asymptotically correct in $x$:

\begin{itemize}
\item[$\bullet$]
$\boldsymbol{\lambda}\big(\mathcal{H}_\infty^{\leq x}\big((n)\big)\big)=  \frac{4}{\pi^2} \log(x+1) + o(1)$\,,
\vspace{2mm}
\item[$\bullet$]
$\lim_{x \to \infty}  \frac{\boldsymbol{\lambda}\big(\mathcal{H}_\infty^{\leq x}\big( (\log p_n)\big)\big)}{\sqrt{x}}
= \frac{\sqrt{\pi}}{2}$\,,
\vspace{2mm}
\item[$\bullet$]
$\boldsymbol{\lambda}\big(\mathcal{H}_\infty^{\leq x}\big((\log n)\big)\big) = O\left(\frac{\sqrt{x}}{(\log \log x)^{\frac{1}{4}}}\right)$\,.
\end{itemize}
To estimate the integrals for the first two statements is fairly standard -- but for the third one this is highly non-trivial.
Indeed, a~recent deep theorem from probabilistic analytic number theory due to Harper from \cite{Harper} shows that
\begin{equation}\label{harperA}
\lim_{T \to \infty }\frac{1}{2T} \int_{-T}^T \Big|\sum_{n=1}^x \frac{1}{n^{it}}\Big| dt  =
O\left(\frac{\sqrt{x}}{(\log \log x)^{\frac{1}{4}}}\right)\,,
\end{equation}
which  is equivalent to
\begin{equation*}\label{harperB}
\int_{\mathbb{T}^\infty} \Bigg| \sum_{\alpha \in \mathbb{N}^{(\mathbb{N})}_0: 1 \leq n \leq x} z^\alpha\Bigg|\,dz
= O\left(\frac{\sqrt{x}}{(\log \log x)^{\frac{1}{4}}}\right)\,.
\end{equation*}
This resolved a~long-standing problem of Helson from \cite{helson2010hankel}. In fact, Helson had conjectured that the
integral is  of order $o(\sqrt{x})$ which would have disproved a~certain generalisation of Nehari's theorem from harmonic
analysis. We also mention that Harper's result  gave a~negative answer to the so-called embedding problem showing that
for $0~<p~<~2$ the $L_p$-integral of every ordinary Dirichlet polynomial $D =\sum_{n \leq x} a_n n^{-s}$ over any segment
of fixed length on the vertical line $[\re s = 1/2]$ can not be bounded by a~universal constant times
$\lim_{T \to \infty }\frac{1}{2T} \int_{-T}^T |D(it)|^p dt$ (see also Problem 2.1 in \cite{saksman2016some}).

Finally, we mention that we also study  the projection constant of Banach spaces of ordinary Dirichlet polynomials supported
on an  index sets of natural numbers with a certain complexity of their prime number decompositions (see again
Section~\ref{Projection constants II}, Case II and III).

We finish the article comparing and linking the results we obtained for  projection constants, with  some important estimates of the unconditional basis constant  $\boldsymbol{\chimon} (\mathcal{H}_{\infty}^{J}(\omega))$ (see Section \ref{sidon} for the definition), for several natural frequencies  $\omega=(\omega_n)$ and index sets  $J \subset \mathbb{N}$. From the group point of view, this is related to the Sidon constant of  specific sets of characters.

\section{Preliminaries}

We use standard notation from Banach space theory as e.g., used in the monographs \cite{diestel1995absolutely,LT1,
pisier1986factorization, tomczak1989banach, wojtaszczyk1996banach}. If not indicated differently, we consider complex
Banach spaces.

Given two sequences $(a_n)$ and $(b_n)$ of non-negative real numbers we write $a_n \prec b_n$, if there is a~constant
$c>0$ such that $a_n \leq c\,b_n$ for all $n\in \mathbb{N}$, while $a_n \sim b_n$ means that $a_n \prec b_n$ and
$b_n \prec a_n$ holds. In the case that an extra parameter $m$ is also involved, for two sequences of non-negative real
numbers $(a_{n,m})$ and $(b_{n,m})$, we write $a_{n,m} \prec_{C(m)} b_{n,m}$ whenever there is a constant $C(m)>0$
(which depends exclusively on $m$ and not on $n$) such that $a_{n,m} \leq C(m) b_{n,m}$ for all $n,m \in \mathbb{N}$.
We use the notation $a_{n,m} \sim_{C(m)} b_{n,m}$ if $a_{n,m} \prec_{C(m)} b_{n,m}$ and $b_{n,m} \succ_{C(m)} a_{n,m}$.
We also write $a_{n,m} \prec_{C^m} b_{n,m}$ when there is a hypercontractive comparision, i.e., there is an absolute
constant $C>0$ such that $a_{n,m} \leq C^m b_{n,m}$ for all $n,m \in \mathbb{N}$. Of course, if $a_{n,m} \prec_{C^m}
b_{n,m}$ and $b_{n,m} \succ_{C^m} a_{n,m}$ we simply write $a_{n,m} \sim_{C^m} b_{n,m}$.

In the following two paragraphs we collect a few standard facts on projection constants as well as topological groups.

\subsection{Projection constants} \label{Projection constants}
Any Banach space $X$ can be embedded isometrically into $\ell_\infty(S)$, where  $S$ is a~nonempty set which in general
depends on $X$.
Indeed, if $S$ is the unit ball of the dual of $X$, then  it follows from the Hahn–Banach theorem that the mapping $x \mapsto (\varphi(x))_{\varphi \in S}$
 is an isometric embedding from $X$ into $\ell_{\infty}(S)$.
Moreover, for  every separable $X$, we may choose $S=\mathbb{N}$ (see e.g. \cite[Theorem 2.5.7]{albiac2006topics}). Throughout the paper, we  use the fact that, if $S$
is a~nonempty set for which the Banach space $X$ is  isometrically isomorphic to a~subspace $Z$ of $\ell_{\infty}(S)$, then
$
\boldsymbol{\lambda}(X) = \boldsymbol{\lambda}(Z, \ell_\infty(S))\,.
$
Indeed, this is due to the fact that $\ell_{\infty}(S)$ is  isometrically injective (see \cite[Definition 2.5.1. and Proposition 2.5.2.]{albiac2006topics}).
Thus finding $\boldsymbol{\lambda}(X)$ is equivalent to finding the norm of a~minimal projection from $\ell_\infty(S)$ onto an isometric copy of
$X$ in $\ell_\infty(S)$. However, this is a~non-trivial problem in general.

General bounds for projection constants of various finite dimensional Banach spaces were studied by many authors. The most fundamental general
upper bound is due to Kadets and Snobar \cite{kadecsnobar}: For every $n$-dimensional Banach space $X_n$ one has
\begin{equation} \label{kadets1}
\boldsymbol{\lambda}(X_n) \leq \sqrt{n}\,.
\end{equation}
In contrast, K\"onig and Lewis \cite{koniglewis} showed that for any Banach space $X_n$ of dimension $n \ge 2$ the strict
inequality $\boldsymbol{\lambda}(X_n) < \sqrt{n}$ holds, and this estimate was improved by Lewis \cite{lewis} showing
\[
\boldsymbol{\lambda}(X_n) \leq \sqrt{n}\,\left(1 - \frac{1}{n^2} \bigg(\frac{1}{5}\bigg)^{2n + 11}\right)\,.
\]
The exact values of $\boldsymbol{\lambda}(\ell_2^n)$ and $\boldsymbol{\lambda}(\ell_1^n)$ were computed by Gr\"unbaum
\cite{grunbaum} and Rutovitz \cite{rutovitz}: In the complex case
\begin{align}\label{grunbuschC-A}
\boldsymbol{\lambda}\big(\ell_2^n(\mathbb{C})\big)  = n \int_{\mathbb{S}_n(\mathbb{C})} |x_1|\,d\sigma
= \frac{\sqrt{\pi}}{2}   \frac{n!}{\Gamma(n + \frac{1}{2})}\,,
\end{align}
where $d\sigma$ stands for the normalized surface measure on the sphere  $\mathbb{S}_n(\mathbb{C})$
in $\mathbb{C}^n$, and
\begin{align}\label{grunbuschC-B}
\boldsymbol{\lambda}\big(\ell_1^n(\mathbb{C})\big)  = \int_{\mathbb{T}^n} \Big|\sum_{k=1}^{n} z_k\Big|\, dz
= \int_{0}^{\infty} \frac{1 -J_0(t)^n}{t^2} dt\,,
\end{align}
where $dz$ denotes the normalized Lebesgue measure on the distinguished boundary $\mathbb{T}^n$ in $\mathbb{C}^n$
and $J_0$ is the zero Bessel function defined by
$
J_0(t) = \frac{1}{2\pi} \int_{0}^{\infty} \cos( t \cos \varphi) d \varphi\,.
$
The corresponding real constants are different:
\begin{align}\label{grunbuschR}
& \boldsymbol{\lambda}\big(\ell_2^n(\mathbb{R})\big)  =  n \int_{\mathbb{S}_n(\mathbb{R})} |x_1|\,d\sigma
= \frac{2}{\sqrt{\pi}}   \frac{\Gamma(\frac{n+2}{2})}{\Gamma(\frac{n+1}{2})} \\
& \boldsymbol{\lambda}\big(\ell_1^n(\mathbb{R})
\big)  =
\begin{cases}
\boldsymbol{\lambda}\big(\ell_2^n(\mathbb{R})\big),  &  \text{$n$ odd}\\[2mm]
\boldsymbol{\lambda}(\ell_2^{n-1}(\mathbb{R})),  &  \text{$n$ even}\,.\\[2mm]
\end{cases}
\end{align}
Gordon \cite{gordon} and Garling-Gordon \cite{garlinggordon} determined the asymptotic growth of
$\boldsymbol{\lambda}\big(\ell_p^n\big)$ for $1<p<\infty$ with $p \notin \{1, 2, \infty\}$:
\begin{equation} \label{gordongarling}
\boldsymbol{\lambda}\big(\ell^n_{p}\big) \sim n^{\min\big\{\frac{1}{2}, \frac{1}{p} \big\}}\,.
\end{equation}
K\"onig, Sch\"utt and Tomczak-Jagermann \cite{konig1999projection} proved that for  $1 \le p \le 2$
\begin{align}\label{koenigschuetttomczak}
  \lim_{n\to \infty} \frac{\boldsymbol{\lambda}\big(\ell_p^n\big)}{\sqrt{n}} = \gamma\,,
\end{align}
where $\gamma = \sqrt{\frac{2}{\pi}}$ in the real  and $\gamma= \frac{\sqrt{\pi}}{2}$ in the complex case.
 For an extensive treatment on all of this and more see the excellent  monograph
\cite{tomczak1989banach}
of Tomczak-Jaegermann.

\subsection{Topological groups} \label{Topological groups}
 As usual a~group $G$ equipped with a~topology $\tau$ is said to be
a~topological group whenever the mapping $(G, \tau) \times (G, \tau) \ni (a, b) \to a b^{-1} \in (G, \tau)$ is continuous.
From here on $G$ is assumed to be a compact group, that is, its topology is compact. In this case, $G$ defines a~natural set of maps
$\{L_a\}_{a\in G}$ and $\{R_a\}_{a\in G}$ on  $C(G)$, the complex-valued continuous functions on $G$, given
for all $a, b\in G$ by
\[
L_af(b):= f(ab), \quad\, \, \, \,\text{and\, $R_af(b) = f(ba)$}, \quad\, f\in C(G)\,.
\]
It is well-known that for every compact group $G$ there exists a~unique Borel~probability measure $\mathrm{m}$ which is left
invariant, that is,
\[
\int_G f(b)\,d\mathrm{m}(b) = \int_G L_af(b)\,d\mathrm{m}(b), \quad\, a\in G, \, \, \, f\in C(G)\,.
\]
This $\mathrm{m}$ is called the Haar measure of $G$. If in addition $\mathrm{m}$ is also right invariant:
\[
\int_G f(b)\,d\mathrm{m}(b) = \int_G R_af(b)\,d\mathrm{m}(b), \quad\, a\in G, \, \, \, f\in C(G)\,,
\]
then the compact group $G$ is called unimodular. Examples of unimodular groups are compact groups in which every one point set is closed.

Let $\mathrm{m}$ be the normalized Haar measure on $G$, and $\widehat{G}$ as usual the dual group of $G$ (i.e., the set of all continuous characters on $G$). For any $f\in L^1(G):=L^1(G, \mathrm{m})$, the Fourier transform of $f$
is given by
\[
\widehat{f}(\gamma):= \int_G\, f(a)\,\overline{\gamma(a)} \,d\mathrm{m}(a), \quad\, \gamma \in \widehat{G}\,.
\]

Recall that $L_1(G)$ forms a~commutative Banach algebra, whenever it carries  the convolution $f_1\ast f_2$ as its multiplication, that is, for $\mathrm{m}$-almost every $a\in G$
\[
(f_1\ast f_2)(a) := \int_G f_1(ab^{-1})f_2(b)\,d\mathrm{m}(b)\,.
\]

Products of groups are going to be of particular interest for our purposes.
 Given compact abelian groups $G_1, \ldots, G_n$, each with the Haar measure $\mathrm{m}_j, 1\leq j\leq n$, we denote  by
$G:= G_1 \times \cdots \times G_n$ the product of these groups endowed  its natural  product operation and product topology . Given an $n$-tuple
of characters $(\gamma_1, \ldots, \gamma_n) \in \widehat{G_1} \times \cdots \times \widehat{G_n}$ and
$\alpha = (\alpha_1, \ldots, \alpha_n)\in \mathbb{Z}^n$, we write $\gamma^\alpha$ for the~character in $\widehat{G}$
given by
\[
\gamma^\alpha(x_1,\ldots, x_n) := \gamma_1(x_1)^{\alpha_1}\cdots \gamma_n(x_n)^{\alpha_n}, \quad\, (x_1, \ldots, x_n)\in G\,.
\]
Of special interest is the $n$-dimensional circle group
$G:= \mathbb{T}^n$, where the  Haar measure $\mathrm{m}=:dz$ on $\mathbb{T}^n$ acts on a~Borel function $f\colon \mathbb{T}^n \to \mathbb{C} $ by the formula
\[
\int_{\mathbb{T}^n} f(z)\,dz = \frac{1}{(2\pi)^n} \int_0^{2\pi}\cdots \int_0^{2\pi} f(e^{it_1}, \ldots, e^{it_n})\,dt_1\ldots dt_n\,.
\]
Recall that $\widehat{\mathbb{T}^n} = \mathbb{Z}^n$, where the identification is given by the fact that for every character
$\gamma \in \widehat{\mathbb{T}^n} $ there is a unique multi index $(\alpha_1, \ldots, \alpha_n) \in \mathbb{Z}^n$ for which
$\gamma(z) = z^{\alpha}$ for every $z= (z_1, \ldots, z_n)\in \mathbb{T}^n$.

\bigskip

\section{Trigonometric polynomials} \label{Integral formula - trigonometric polynomials}
\label{Integral formula - trigonometric polynomials}

For any compact group $G$ and any  nonempty finite set $E\subset \widehat{G}$ we denote $\text{span}\,E$  by $\text{Trig}_E(G)\,,$ the space of all trigonometric polynomials $P$ on $G$ which are supported on $E$, that is, the Fourier transform $\widehat{P}$ is supported on $E$. In what follows, we consider this finite dimensional space with the supremum norm on $G$. Note that every polynomial $P\in \text{Trig}_E(G)\,$ has the form
\[
P(g) \,= \,\sum_{\gamma \in E} \widehat{P}(\gamma)\, \gamma(g)\,, \quad g \in G\,.
\]
The space of all trigonometric polynomials on $G$ is denoted by $\text{Trig}(G)$.

We need  two  fundamental consequences of the Peter-Weyl theorem (see, e.g., \cite[Theorem 1.3.3]{queffelec2013diophantine}): For
any compact abelian group $G$, the space $\text{Trig}(G)$ is dense in the space $C(G)$ of complex-valued functions
(see \cite[Theorem 1.3.4]{queffelec2013diophantine}), and  the dual group $\widehat{G}$ is an orthonormal basis of the Hilbert space
$L^2(G, \mathrm{m})$ (see \cite[Theorem 1.3.6]{queffelec2013diophantine}).

For the special case $G = \mathbb{T}^n$ with $\widehat{G} =
\mathbb{Z}^n$ we are particularly interested in the index sets
\begin{equation}\label{index1}
J_p(m,n):= \Big\{\alpha \in \mathbb{Z}^n : \,\, \big(\sum_{j} |\alpha_j|^p\big)^\frac{1}{p} = m\Big\},
\quad\, \Lambda_p(m,n):= \Big\{ \alpha \in \mathbb{N}_0^n : \,\, \big(\sum_{j} \alpha_j^p\big)^\frac{1}{p} = m\Big\}
\end{equation}
as well as
\begin{equation}\label{index2}
J_p(\leq m,n):= \Big\{\alpha \in \mathbb{Z}^n : \,\, \big(\sum_{j} \alpha_j^p\big)^\frac{1}{p} \leq  m\Big\}, \quad\,
\Lambda_p(\leq m,n):= \Big\{ \alpha \in \mathbb{N}_0^n : \,\,\big(\sum_{j} |\alpha_j|^p\big)^\frac{1}{p} \leq  m\Big\}\,,
\end{equation}
where $m,n \in \mathbb{N}$ and  $p\in \{1, 2, \infty\}$. For $p=1$
we write   $|\alpha|=\sum_{j} |\alpha_j|$, usually called the  order of $\alpha$, and  for $p = \infty$ we of course by $\big(\sum_{j} |\alpha_j|^p\big)^{1/p}$ mean $\max_j |\alpha_j|$.

For $m~\in~\mathbb{N}$ we write $\text{Trig}_{m}(\mathbb{T}^n) : = \text{Trig}_{\Lambda_1(m,n)}(\mathbb{T}^n)$
for the Banach space of all analytic
trigonometric polynomials which are $m$-homogeneous, i.e., all polynomials of the form
\[
P(z) = \sum_{\alpha \in \Lambda_1(m,n)} c_\alpha z^{\alpha} \quad \text{for}\,\, z\in \mathbb{T}^n\,.
\]
Similarly, $\text{Trig}_{\leq m}(\mathbb{T}^n)  : = \text{Trig}_{\Lambda_1(\leq m,n)}(\mathbb{T}^n)$,
stands for the space of all analytic trigonometric polynomials of degree $\leq m$.

The following integral formula  for the projection constant of the Banach space of all trigonometric polynomials
on a~given compact abelian group supported on a  priori given set of characters in $\widehat{G}$, is one of the
main sources of what we intend to do.

\begin{theorem}\label{C(G)proj}
Let $G$ be a compact abelian group and  $E:=\{\gamma_1,\ldots, \gamma_N\}\subset \widehat{G}$  a~finite set of
characters. Then $\Pi \colon C(G) \to C(G)$, given by $\Pi f = \sum_{j=1}^N \widehat{f}(\gamma_j) \gamma_j$ for
all $f\in C(G)$, is  the unique projection onto $\text{Trig}_E(G)$ that commutes with the action of the group on
$C(G)$. Moreover, $\Pi $ is minimal{\rm:}
\[
\boldsymbol{\lambda}\big(\text{Trig}_E(G)\big) = \big\| \Pi \colon C(G)\to C(G)\big\|
= \int_G \Big|\sum_{j=1}^N \gamma_j(a)\Big|\,d\mathrm{m}(a)\,.
\]
\end{theorem}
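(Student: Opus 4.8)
The plan is to realize $\Pi$ as a convolution operator, read off its algebraic and metric properties from the kernel, and only at the very end pass from the minimal projection living inside $C(G)$ to the absolute projection constant. First I would write $\Pi$ as convolution against $k:=\sum_{j=1}^N\gamma_j$. Using $\widehat{f}(\gamma_j)=\int_G f\,\overline{\gamma_j}\,d\mathrm{m}$ and $\gamma_j(a)\overline{\gamma_j(b)}=\gamma_j(ab^{-1})$, a direct computation gives $\Pi f(a)=\int_G f(b)\,k(ab^{-1})\,d\mathrm{m}(b)=(f\ast k)(a)$, equivalently $\Pi=\int_G k(b)\,\tau_b\,d\mathrm{m}(b)$, where $\tau_b$ is translation by $b$. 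Since $\widehat{G}$ is orthonormal in $L^2(G)$ one has $\widehat{k}=\mathbf 1_E$, so $\Pi$ fixes every $\gamma\in E$ and annihilates every $\gamma\notin E$; hence $\Pi$ is idempotent with range $\text{Trig}_E(G)$. Commutation with the group action is immediate from $\tau_a\gamma=\gamma(a)\gamma$ together with $\widehat{\tau_a f}(\gamma)=\gamma(a)\widehat{f}(\gamma)$, which follows from invariance of $\mathrm{m}$.

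For uniqueness I would take any translation-commuting projection $Q$ onto $\text{Trig}_E(G)$ and set $D:=Q-\Pi$. Then $D$ commutes with all $\tau_a$, vanishes on $\text{Trig}_E(G)$, and maps $C(G)$ into $\text{Trig}_E(G)$. Evaluating on a character $\gamma$ and writing $D\gamma=\sum_j c_j\gamma_j$, the identity $\tau_a(D\gamma)=D(\tau_a\gamma)=\gamma(a)\,D\gamma$ forces $c_j\gamma_j(a)=c_j\gamma(a)$ for all $a$, so $c_j\neq 0$ only when $\gamma_j=\gamma$. This yields $D\gamma=0$ for every $\gamma\in\widehat{G}$, and since $\text{Trig}(G)$ is dense in $C(G)$ we get $D=0$, i.e. $Q=\Pi$.

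Next I would compute the operator norm and establish minimality inside $C(G)$. From $\Pi=\int_G k(b)\tau_b\,d\mathrm{m}(b)$ and $\|\tau_b\|=1$ one gets $\|\Pi\|\le\int_G|k|\,d\mathrm{m}$, while testing on continuous approximations of the unimodular function $\overline{k}/|k|$ evaluated near the identity gives the reverse inequality; hence $\|\Pi\|=\int_G\big|\sum_j\gamma_j\big|\,d\mathrm{m}$. Minimality among all projections then follows from Rudin's averaging: for an arbitrary projection $P$ onto $\text{Trig}_E(G)$ each operator $\tau_{a^{-1}}P\tau_a$ is again such a projection of the same norm (because $\text{Trig}_E(G)$ is translation invariant), so $\overline{P}:=\int_G\tau_{a^{-1}}P\tau_a\,d\mathrm{m}(a)$ is a translation-commuting projection onto $\text{Trig}_E(G)$ with $\|\overline{P}\|\le\|P\|$; by the uniqueness just proved $\overline{P}=\Pi$, whence $\|\Pi\|\le\|P\|$. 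This gives $\boldsymbol{\lambda}\big(\text{Trig}_E(G),C(G)\big)=\|\Pi\|$.

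Finally, upgrading the relative constant in $C(G)$ to the absolute constant $\boldsymbol{\lambda}(\text{Trig}_E(G))$ is the step I expect to be the main obstacle. The idea is that no superspace can beat $C(G)$: given any isometric embedding $\text{Trig}_E(G)\subseteq Z$, the evaluations $\delta_g\colon P\mapsto P(g)$ form a norming family in the unit ball of $\text{Trig}_E(G)^*$, and extending each $\delta_g$ by Hahn--Banach to $\widetilde{\delta}_g\in Z^*$ of the same norm produces a norm-one operator $U\colon Z\to C(G)$, $(Uz)(g)=\widetilde{\delta}_g(z)$, restricting to the inclusion on $\text{Trig}_E(G)$. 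Then $Q:=\Pi\circ U$ is a projection onto $\text{Trig}_E(G)$ with $\|Q\|\le\|\Pi\|$, so $\boldsymbol{\lambda}(\text{Trig}_E(G),Z)\le\|\Pi\|$ for every $Z$, which together with the trivial bound $\boldsymbol{\lambda}(\text{Trig}_E(G),C(G))\le\boldsymbol{\lambda}(\text{Trig}_E(G))$ forces equality. The delicate point is precisely that $U$ must take values in continuous (or at least measurable) functions, which requires choosing the extensions $g\mapsto\widetilde{\delta}_g$ continuously in $g$; the natural alternative of averaging a minimal projection directly inside the injective space $\ell_\infty(B_{\text{Trig}_E(G)^*})$ endowed with the induced isometric $G$-action runs into the failure of strong continuity of that action and must be treated with weak-$\ast$ integration. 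This is where the real work lies, and where invoking the clean abstract averaging statement (the companion result to this theorem) is most valuable.
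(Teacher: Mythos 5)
Your argument up to and including the minimality of $\Pi$ inside $C(G)$ is essentially the paper's proof: the authors also prove uniqueness by showing that any translation-commuting projection $\mathbf{Q}$ satisfies $\widehat{\mathbf{Q}\gamma}(\gamma')=0$ for $\gamma\neq\gamma'$, deducing $\mathbf{Q}\gamma=c_\gamma\gamma$ and concluding by density of $\text{Trig}(G)$; they compute $\|\Pi\|$ from the convolution representation $\Pi f=f\ast\big(\sum_{j}\gamma_j\big)$ by interchanging the two suprema; and they obtain minimality in $C(G)$ from the averaging result (Theorem~\ref{rudy}), which is exactly your $\overline{P}=\int_G\tau_{a^{-1}}P\tau_a\,d\mathrm{m}(a)$. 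All of this is correct, modulo the routine care needed in approximating $\overline{k}/|k|$ by continuous functions of norm at most one (the functional $f\mapsto\int_G fk\,d\mathrm{m}$ on $C(G)$ has norm $\|k\|_{L^1}$ by regularity of $\mathrm{m}$).

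The step you flag at the end is a genuine gap in your write-up, and you are right that it needs an argument: for a subspace $X\subseteq C(K)$ one only has $\boldsymbol{\lambda}(X,C(K))\leq\boldsymbol{\lambda}(X)$ in general, and the inequality can be strict (e.g.\ $\boldsymbol{\lambda}(C[0,1],C[0,1])=1$ while $C[0,1]$ is not $1$-injective). The paper itself passes over this point in silence, since Theorem~\ref{rudy} only delivers the relative constant $\boldsymbol{\lambda}\big(\text{Trig}_E(G),C(G)\big)$. However, both routes you sketch are harder than necessary, and neither is needed: the formula $\Pi f=f\ast k$ with $k=\sum_{j=1}^N\gamma_j\in L^1(G)$ defines a projection of $L^\infty(G,\mathrm{m})$ onto $\text{Trig}_E(G)$ of norm at most $\|k\|_{L^1}$ (Young's inequality; the embedding $\text{Trig}_E(G)\subseteq L^\infty(G)$ is isometric because Haar measure has full support). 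Since $L^\infty(\mathrm{m})$ is isometrically a $C(K)$ over an extremally disconnected compact $K$, it is $1$-injective (Nachbin--Goodner--Kelley; Hasumi for complex scalars), and for any $1$-injective superspace $W\supseteq X$ one has $\boldsymbol{\lambda}(X)=\boldsymbol{\lambda}(X,W)$. Hence $\boldsymbol{\lambda}\big(\text{Trig}_E(G)\big)\leq\|k\|_{L^1}$, which together with your lower bound $\boldsymbol{\lambda}\big(\text{Trig}_E(G)\big)\geq\boldsymbol{\lambda}\big(\text{Trig}_E(G),C(G)\big)=\|k\|_{L^1}$ closes the proof. With that two-line addition your argument is complete and coincides with the paper's.
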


The following section is devoted to the proof of this result.

\subsection{Averaging projections} \label{Rudin's averaging technique}

As mentioned, one of the main tools we intend to use is a method due to Rudin (see the forthcoming Theorem~\ref{rudy}).
Roughly speaking, under certain assumptions, there is a somewhat universal  averaging technique which allows to construct
new projections with additional and somehow better properties from an a~priori given projection.

Given a topological group $G$   and a Banach space $Y$, we need to explain when all elements of $G$  act  as bounded
linear operators on  $Y$. Formally this means that there is a mapping
\[
T\colon G \to \mathcal{L}(Y)\,, \, \,\, \,a \mapsto T_a
\]
such that
\[
T_e = I_Y, \quad\, T_{ab} = T_a T_b, \quad\,  a, \, \, b\in G
\]
and all mappings
\begin{equation}\label{con(i)}
 G\ni a \mapsto T_a(y) \in Y,  \quad\,   \, \, y\in Y
\end{equation}
are continuous. Then $G$ is said to act on $Y$ through $T$ (or simply, $G$ acts on $Y$). If in addition all operators
$T_a, a\in G$ are isometries, then we say that $G$ acts isometrically on $Y$. We say that $S \in \mathcal{L}(Y)$ commutes
with the action of $G$ on $Y$ through $T$ whenever $S$ commutes with  all $T_b,\, b\in G$.

\begin{theorem} \label{rudy}
Let $Y$ be a~Banach space, $X$ a~complemented subspace of\, $Y$, and \,$\mathbf{Q} \colon Y \to Y$ a~projection onto $X$.
Suppose that $G$ is a~compact group with Haar measure $m$, which   acts on $Y$ through $T$ such that $X$ is invariant under
the action of $G$, that is,
\begin{equation} \label{invariant}
T_a(X) \subset X, \quad\,a\in G\,.
\end{equation}
Then $\mathbf{P}\colon Y \to Y$ given by
\begin{equation}\label{equation rudy}
\mathbf{P}(y):= \int_{G} T_{a^{-1}}\mathbf{Q}T_a(y)\,d\mathrm{m}(a), \quad\, y\in Y\,,
\end{equation}
is a~projection onto $X$ which commutes with the action  of $G$ on $Y$, i.e., $T_a\mathbf{P} = \mathbf{P}T_a$ for all
$a\in G$, and satisfies
\[
\|\mathbf{P}\| \,\leq\, \|\mathbf{Q}\|\,\,\,\sup_{a\in G}\|T_a\|^2\,.
\]
Moreover, if there is a unique projection on $Y$ onto $X$ that commutes with
the  action of $G$ on $Y$, and if $G$ acts isometrically  on $Y$, then $\mathbf{P}$ given in the formula~\eqref{equation rudy} is minimal, i.e.,
\[
\boldsymbol{\lambda}(X,Y) = \|\mathbf{P}\|\,.
\]
\end{theorem}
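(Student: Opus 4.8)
The plan is to verify in turn that the averaged operator $\mathbf{P}$ in \eqref{equation rudy} is well defined, is a projection onto $X$, commutes with the action, obeys the stated norm bound, and finally is minimal under the additional hypotheses. First I would check that the $Y$-valued integrand $a \mapsto T_{a^{-1}}\mathbf{Q}T_a(y)$ is Bochner integrable. Since each orbit map $a \mapsto T_a(y)$ is continuous and $G$ is compact, the set $\{T_a(y):a\in G\}$ is bounded for every $y$, so the uniform boundedness principle yields $M:=\sup_{a\in G}\|T_a\|<\infty$. From this I would upgrade the separate continuity in \eqref{con(i)} to joint continuity of $(a,z)\mapsto T_a(z)$ via the estimate $\|T_a z - T_{a_0}z_0\| \le M\|z-z_0\| + \|T_a z_0 - T_{a_0}z_0\|$; composing with the continuous maps $a\mapsto a^{-1}$ and $a\mapsto \mathbf{Q}T_a(y)$ then shows the integrand is continuous, hence Bochner integrable with $\int_G \|T_{a^{-1}}\mathbf{Q}T_a(y)\|\,d\mathrm{m}(a)\le M^2\|\mathbf{Q}\|\,\|y\|<\infty$.

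Next I would establish the projection property. For the range, observe that $\mathbf{Q}T_a(y)\in X$ and, by the invariance \eqref{invariant}, $T_{a^{-1}}\mathbf{Q}T_a(y)\in X$ for every $a$; since $X$ is closed and the Bochner integral of an $X$-valued function stays in $X$, we get $\mathbf{P}(y)\in X$. For $x\in X$ the invariance gives $T_a(x)\in X$, whence $\mathbf{Q}T_a(x)=T_a(x)$ and $T_{a^{-1}}\mathbf{Q}T_a(x)=T_{a^{-1}}T_a(x)=x$; integrating the constant $x$ against the probability measure $\mathrm{m}$ yields $\mathbf{P}(x)=x$, so $\mathbf{P}$ is a projection onto $X$. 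The norm bound is then immediate from the standard inequality $\big\|\int_G f\,d\mathrm{m}\big\|\le\int_G\|f\|\,d\mathrm{m}$ applied to the integrand, together with $\|T_{a^{-1}}\mathbf{Q}T_a(y)\|\le M^2\|\mathbf{Q}\|\,\|y\|$ and $\mathrm{m}(G)=1$.

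For the commutation $T_b\mathbf{P}=\mathbf{P}T_b$ I would pull the bounded operators through the Bochner integral to write $T_b\mathbf{P}(y)=\int_G T_{ba^{-1}}\mathbf{Q}T_a(y)\,d\mathrm{m}(a)$ and $\mathbf{P}(T_by)=\int_G T_{a^{-1}}\mathbf{Q}T_{ab}(y)\,d\mathrm{m}(a)$. Setting $F(a):=T_{ba^{-1}}\mathbf{Q}T_a(y)$, a direct check gives that the second integrand equals $F(ab)$, so the identity reduces to the right invariance $\int_G F(ab)\,d\mathrm{m}(a)=\int_G F(a)\,d\mathrm{m}(a)$; here I would invoke that a compact group is unimodular, so its Haar measure is both left and right invariant. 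Finally, for minimality I would use that isometric action forces $M=1$, so the norm bound becomes $\|\mathbf{P}\|\le\|\mathbf{Q}\|$ for \emph{any} projection $\mathbf{Q}$ onto $X$. Given such a $\mathbf{Q}$, its average is again a projection onto $X$ commuting with the action, so the uniqueness hypothesis forces it to equal $\mathbf{P}$; hence $\|\mathbf{P}\|\le\|\mathbf{Q}\|$ for every admissible $\mathbf{Q}$, giving $\|\mathbf{P}\|\le\boldsymbol{\lambda}(X,Y)$, while the reverse inequality holds because $\mathbf{P}$ is itself an admissible projection. I expect the main technical obstacle to be the well-definedness step: promoting the pointwise continuity hypothesis \eqref{con(i)} to the joint continuity needed for Bochner integrability of the twisted integrand, which is where the compactness of $G$ and the uniform boundedness of the action are genuinely used; the remaining steps are then essentially formal once the Haar measure is known to be two-sided invariant.
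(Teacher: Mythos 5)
Your proposal is correct and follows essentially the same route as the paper's proof: uniform boundedness of the action, the pointwise verification that the average is a projection onto $X$, the substitution $a\mapsto ab$ together with the (two-sided) invariance of the Haar measure for the commutation, and the uniqueness-plus-isometry argument for minimality. The only small divergence is at the integrability step, where you upgrade the separate continuity of the action to joint continuity and conclude the integrand is norm-continuous, whereas the paper invokes Pettis-type measurability (bounded, weakly measurable, essentially separably valued); both are valid.
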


This result (in one way or the other) found various applications in the literature (see, e.g., \cite{konig1995projections,
konig1999projection, schuettkwapien, rieffel2006lipschitz, light1986minimal,  rudin1962projections,  rudin1986new}.
In the case of the circle group, it can be traced back to Faber's   article  \cite{faber1914interpolatorische}. For the sake of completeness
we include a~proof, which is  inspired by \cite{rudin1962projections} (and  also \cite[Theorem III.B.13]{wojtaszczyk1996banach}).

\begin{proof}[Proof of Theorem~\ref{rudy}]
Note first, that, by the Banach-Steinhaus theorem,  $\sup_{a\in G} \|T_a\|<\infty$. Then, given $y \in Y$, the mapping
$a\ni G \mapsto T_{a^{-1}}\mathbf{Q}T_a \in \mathcal{L}(Y)$ is bounded and by \eqref{con(i)}
measurable
(being almost everywhere separably valued and weakly measurable), and hence Bochner integrable.
Consequently, $\mathbf{P}$ defines an operator on $Y$.
 Moreover, from  \eqref{invariant} we deduce
for all $y\in Y$ and for all $x\in X$ that
\[
T_{a^{-1}}\mathbf{Q}T_a(y) \in X, \quad\, T_{a^{-1}}\mathbf{Q}T_a(x) = x\,,
\]
implying that  $\mathbf{P}$ is a~projection from $Y$ onto $X$. The hypothesis that $G$ acts on $Y$ (through $T$) yields
for all $b\in G$
\[
T_{b^{-1}}\mathbf{P}T_b = \int_G T_{b^{-1}}T_{a^{-1}}\mathbf{Q}T_a T_b\,d\mathrm{m}(a)
= \int_G T_{(ab)^{-1}}\mathbf{Q}T_{ab}\,d\mathrm{m}(a) = \mathbf{P}\,,
\]
so $\mathbf{P}$ commutes with the action of $G$ on $Y$.
Since for all $y\in Y$
\[
\|\mathbf{P}y\|_Y \leq \int_G \|T_{a^{-1}}\|\,\|\mathbf{Q}\,\|\|T_a\|\,\|y\|_Y\,d\mathrm{m}(a)
\leq \|\mathbf{Q}\|\,\sup_{a\in G}\|T_a\|^{2}\,\|y\|_Y\,,
\]
the required estimate for  $\|\mathbf{P}\|$ follows.
If moreover there is a unique projection on $Y$ onto $X$ that commutes with
the  action of $G$ on $Y$, and if
$G$ acts isometrically on $X$,
then the projection $\mathbf{P}$ from
~\eqref{equation rudy}
does not depend on
$\mathbf{Q}$ and
$\|\mathbf{P}\| \,\leq\, \|\mathbf{Q}\|$ for all projections $\mathbf{Q}$ on $Y$ onto $X$, i.e.,
$\|\mathbf{P}\| \leq  \boldsymbol{\lambda}(X,Y)$. This completes the proof.
\end{proof}

\begin{proof}[Proof of  Theorem~\ref{C(G)proj}]Note first  that $G$ in a natural way acts on $C(G)$ (in the sense
of Section~\ref{Rudin's averaging technique}), where the action is given by the mapping $T: G \to \mathcal{L}(C(G)), \,a \mapsto T_a$ with
\[
T_af(b):= f(ab), \quad\, f\in C(G), \,\, b\in G\,.
\]
We claim that $\Pi \colon C(G) \to C(G)$ is the unique projection onto $\text{Trig}_E(G)$ that commutes with all translation operators
$T_a$, $a\in G$. To see this, assume that $\bold{Q} \colon C(G) \to  C(G)$ onto $\text{Trig}_E(G)$ is a~projection that commutes with all
translation operators. Then for all $\gamma, \gamma'\in \widehat{G}$ one has
\[
\widehat{T_a \bold{Q}\gamma}(\gamma') = \widehat{\bold{Q} T_a\gamma}(\gamma')\,.
\]
It is easy to check that $\widehat{T_a\bold{Q}\gamma}(\gamma') = \gamma'(a)\, \widehat{\bold{Q}\gamma}(\gamma')$ and
$\widehat{\bold{Q}T_a\gamma}(\gamma')= \gamma(a)\, \widehat{\bold{Q}\gamma}(\gamma')$. In consequence, we get
\[
\gamma'(a) \widehat{\bold{Q}\gamma}(\gamma') = \gamma(a) \widehat{\bold{Q}\gamma}(\gamma'), \quad\, a\in G\,.
\]
This implies that, for all $\gamma, \gamma'\in \widehat{G}$ with $\gamma \neq \gamma'$, we have
$\widehat{\bold{Q}\gamma}(\gamma') = 0$. On the other hand,  the Peter-Weyl theorem  states that $\widehat{G}$ forms an orthonormal basis in the Hilbert space $L^2(G)$, hence
\[
\bold{Q}\gamma = \sum_{\gamma' \in \widehat{G}} \widehat{\bold{Q}\gamma}(\gamma')\,\gamma', \quad\, \gamma\in \widehat{G}\,,
\]
and consequently, for every character $\gamma \in \widehat{G}$ there is a~scalar $c_\gamma$ such that $\bold{Q}\gamma = c_{\gamma}\gamma$.

Since $\bold{Q}$ is a~projection onto $\text{Trig}(G)$, $c_\gamma = 0$ for all $\gamma \in\widehat{G}\setminus E$, and $c_{\gamma} = 1$
for all $\gamma \in E$. In consequence, $\bold{Q}\gamma = \gamma$ for all $\gamma \in E$, and $\bold{Q}\gamma = 0$ for all
$\gamma  \in \widehat{G} \setminus E$. Hence the projection $\bold{Q}$, restricted to the algebra $\text{Trig}(G)$
of all trigonometric polynomials on $G$, has the representation
\begin{align}\label{representation of proj onto Trig}
\bold{Q}f = \sum_{j=1}^N \widehat{f}(\gamma_j) \gamma_j, \quad\, f\in \text{Trig}(G)\,.
\end{align}
Consequently, we conclude from the density of $\text{Trig}(G)$ in  $C(G)$ that the above formula holds
for all $f\in C(G)$. Hence $\bold{Q}=\Pi$. This proves the claim.

Since $\Pi$ is the unique projection onto $\text{Trig}_E(G)$ that commutes with all translation operators, $\Pi$ is fixed by the averaging technique introduced in Theorem~\ref{rudy}. Now observe that $G$ acts isometrically on $C(G)$, i.e., all mappings  $T_a\colon C(G) \to C(G)$ are isometries on  $C(G)$. Since for all $a\in G$
\[
T_a\gamma  = \gamma(a)\gamma, \quad\, \gamma \in \widehat{G}\,,
\]
it follows that $T_a(\text{Trig}_E(G)) \subset \text{Trig}_E(G)$ for all $a\in G$.  Then, by the moreover part of Theorem~\ref{rudy},
$\Pi$ is a~minimal projection, that is,
\[
\boldsymbol{\lambda}\big(\text{Trig}_E(G)\big) = \big\| \Pi \colon C(G)\to C(G)\big\|\,.
\]
Finally, it remains to prove the integral formula for the norm of  $\Pi$. Since $f \ast \gamma=\widehat{f}(\gamma)\gamma$
for all $f\in L^1(G)$ and $\gamma \in \widehat{G}$, we get by  \eqref{representation of proj onto Trig},
\[
\Pi f= f \ast \Big(\sum_{j=1}^N \gamma_j\Big), \quad\, f\in C(G)\,.
\]
Clearly, $\sum_{j=1}^N \gamma_j \in C(G)$, so it can readily be shown by direct computation that
\[
\big\| \Pi \colon C(G) \to C(G)\big\| = \int_G \, \Big|\sum_{j=1}^N \gamma_j(a)\Big|\,d\mathrm{m}(a).
\]
Indeed, we have

\begin{align*}
\big\| \Pi \colon C(G) \to C(G)\big\|  &  =\sup_{\Vert f \Vert_{\infty} =1 } \; \; \; \sup_{b \in G} \; \; \; \left\vert \int_G f(a) \Big(\sum_{j=1}^N \gamma_j\Big)( b  a^{-1}) d\mathrm{m}(a)
\right\vert \\
& = \sup_{b \in G} \; \; \; \sup_{\Vert f \Vert_{\infty} =1 }  \; \; \; \left\vert \int_G f(a) \Big(\sum_{j=1}^N \gamma_j\Big) (b  a^{-1}) d\mathrm{m}(a)
\right\vert \\
& = \sup_{b \in G} \; \; \; \int_G \Big|\sum_{j=1}^N \gamma_j(b  a^{-1}) \Big|\,d\mathrm{m}(a)\,.
\end{align*}
Now note that by the translation invariance of the Haar measure $\mathrm{m}$ and the fact that it is both left and right invariant ($G$ is unimodular since it is abelian), for each $b\in G$ we have,
\[
\int_G \Big|\sum_{j=1}^N \gamma_j(b  a^{-1})\Big|\,d\mathrm{m}(a)    = \int_G \, \Big|\sum_{j=1}^N \gamma_j(a^{-1})\Big|\,d\mathrm{m}(a) =\int_G \, \Big|\sum_{j=1}^N \gamma_j(a)\Big|\,d\mathrm{m}(a)\,.
\]
This completes the proof.
\end{proof}

\subsection{Projection constants - trigonometric polynomials}

Based on the integral formula from Theorem~\ref{C(G)proj}, we derive  concrete formulas and asymptotically optimal
estimates for the projection constants of spaces of trigonometric polynomials on  compact abelian groups, which
have Fourier expansions supported on an a~priori fixed  set of characters.

\subsubsection{$\Lambda(2)$-sets} \label{L2}

In order to show a very first application of Theorem \ref{C(G)proj}, we need some further notation and preliminaries.
Recall that Rudin in his  classical paper \cite{rudin1960trigonometric} from  1960 introduced the notion of $\Lambda(p)$-sets
within the setting of Fourier analysis on the circle group $\mathbb{T}$. In modern language, if $G$ is a~compact abelian group
(with Haar measure $\mathrm{m}$) and $p\in (1, \infty)$, then the~subset ${E} \subset \widehat{G}$ is said to be a~$\Lambda(p)$-set whenever there exists a~constant $C>0$ such that, for every trigonometric polynomial $P \in \text{Trig}_E(G)$, one has
\begin{equation}\label{lambdap}
\|P\|_{L_p(G)} \leq C \|P\|_{L_1(G)}\,.
\end{equation}
In this case, the least such constant  is called the $\Lambda(p)$-constant of $E$, and denoted by $C_p=C_p(E)$. Let us here
remark that for  $p>2$ the validity of \eqref{lambdap}  is equivalent to the existence of a~constant $A_p > 0$ such that
\[
\|P\|_{L_p(G)} \leq A_p \|P\|_{L_2(G)}, \quad\, P\in \text{Trig}_E(G)\,.
\]
The following almost immediate consequence of Theorem \ref{C(G)proj} shows that $\Lambda(2)$-sets are of particular
importance for our purposes -- see also Corollary~\ref{manydimensionsB} and Corollary~\ref{b2dirichelet}.

\begin{corollary} \label{corB2}
Let $G$ be a compact abelian group. Then, for any  finite set $E= \{\gamma_1,\ldots, \gamma_N\}\subset \widehat{G}$
of different characters with $\Lambda(2)$ constant $C_2$, we have
\[
C_2^{-1} \sqrt{N} \leq \boldsymbol{\lambda}\big(\text{Trig}_E(G)\big) \leq \sqrt{N}\,.
\]
\end{corollary}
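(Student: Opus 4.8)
The plan is to apply the integral formula of Theorem~\ref{C(G)proj} directly to the single distinguished polynomial
\[
f := \sum_{j=1}^N \gamma_j \in \text{Trig}_E(G)\,.
\]
By that theorem, $\boldsymbol{\lambda}\big(\text{Trig}_E(G)\big) = \int_G \big|\sum_{j=1}^N \gamma_j(a)\big|\,d\mathrm{m}(a) = \|f\|_{L_1(G)}$, so the whole task reduces to sandwiching this single $L_1$-norm between two multiples of $\sqrt{N}$. The proof is then purely a matter of comparing $\|f\|_{L_1(G)}$ with $\|f\|_{L_2(G)}$.

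First I would compute $\|f\|_{L_2(G)}$ exactly. Since $\gamma_1,\ldots,\gamma_N$ are \emph{distinct} characters, the Peter-Weyl theorem (recalled in Section~\ref{Integral formula - trigonometric polynomials}) guarantees that $\widehat{G}$ is an orthonormal basis of $L^2(G,\mathrm{m})$; in particular the $\gamma_j$ form an orthonormal system. Hence
\[
\|f\|_{L_2(G)}^2 = \sum_{j=1}^N \|\gamma_j\|_{L_2(G)}^2 = N\,, \qquad \text{so}\qquad \|f\|_{L_2(G)} = \sqrt{N}\,.
\]

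For the upper bound I would use that $\mathrm{m}$ is a probability measure, so by the Cauchy--Schwarz (equivalently Jensen) inequality $\|f\|_{L_1(G)} \leq \|f\|_{L_2(G)} = \sqrt{N}$, which yields $\boldsymbol{\lambda}\big(\text{Trig}_E(G)\big) \leq \sqrt{N}$. For the lower bound I would invoke the $\Lambda(2)$ hypothesis exactly once: applying the defining inequality $\|P\|_{L_2(G)} \leq C_2 \|P\|_{L_1(G)}$ to the admissible choice $P = f \in \text{Trig}_E(G)$ gives
\[
\sqrt{N} = \|f\|_{L_2(G)} \leq C_2 \|f\|_{L_1(G)} = C_2\, \boldsymbol{\lambda}\big(\text{Trig}_E(G)\big)\,,
\]
and dividing by $C_2$ produces $C_2^{-1}\sqrt{N} \leq \boldsymbol{\lambda}\big(\text{Trig}_E(G)\big)$. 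Combining the two bounds finishes the argument.

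There is no real obstacle here once Theorem~\ref{C(G)proj} is in hand: the only genuine content is the observation that the projection constant is literally the $L_1$-norm of $f=\sum_j\gamma_j$, together with the orthonormality that fixes $\|f\|_{L_2(G)}=\sqrt{N}$. The role of the $\Lambda(2)$ constant is simply to reverse the trivial $\|f\|_{L_1(G)}\leq\|f\|_{L_2(G)}$ inequality on the subspace $\text{Trig}_E(G)$, and this is where the whole lower bound comes from; the sharpness of the estimate therefore reflects exactly how far $E$ is from being a $\Lambda(2)$-set with small constant.
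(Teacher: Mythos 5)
Your proposal is correct and follows essentially the same route as the paper: both use Theorem~\ref{C(G)proj} to identify the projection constant with $\|\sum_j\gamma_j\|_{L_1(G)}$, compute the $L_2$-norm as $\sqrt{N}$ via orthonormality of distinct characters, and then combine Cauchy--Schwarz (for the upper bound) with the defining $\Lambda(2)$-inequality (for the lower bound). No gaps.
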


\begin{proof}
Let $\mathrm{m}$ be the Haar measure on $G$. Then from Theorem \ref{C(G)proj}, we get
\[
\boldsymbol{\lambda}\big(\text{Trig}_E(G)\big)= \int_G |\gamma_1(x) + \ldots + \gamma_N(x)|\,d\mathrm{m}(x)\,.
\]
Since $\widehat{G}$ is an orthonormal basis in $L_2(G, \mathrm{m})$,
\[
\bigg( \int_G |\gamma_1(x) + \ldots + \gamma_N(x)|^2 \, d \mathrm{m}(x)
\bigg)^{\frac{1}{2}} = \sqrt{N} .
\]
The proof is completed by using \eqref{lambdap} and the Cauchy-Schwarz inequality.
\end{proof}

In combination with the preceding corollary we need the following example.

\begin{remark} \label{B2}
Let $G$ be a compact abelian group. Following \cite{grahamhare2013}, a~set $E \subset \widehat{G}$ is said to be
a~$B_2$-set, whenever $\gamma_1 \gamma_2 = \gamma_3 \gamma_4$ for all $\gamma_1, \ldots, \gamma_4 \in E$ if and only
if $\{\gamma_3, \gamma_4\}$ is a permutation of $\{\gamma_1, \gamma_2\}$. It is worth noting that $B_2$-sets are
$\Lambda(2)$-sets with $\Lambda(2)$-constant $\sqrt{2}$ (see
\cite[Proposition 6.3.11]{grahamhare2013}).

In passing we observe that \cite[Proposition 6.3.11]{grahamhare2013}, also implies the following: If $E\subset \widehat{G}$
fulfills that there exists $N\ge 2$ such that for every $\gamma\in \widehat{G}$ there are at most $N$ pairs of the form
$(\gamma_{i_1},\gamma_{i_2})\subset E\times E$ with $\gamma_{i_1}\gamma_{i_2}=\gamma$, then  $E$ is $\Lambda(2)$-set
with $C_2\leq \sqrt{N}$.
\end{remark}

Another example of  $\Lambda(2)$-sets, important for our purposes, follows from an inequality due to  Weissler \cite{weissler1980logarithmic}
(see also \cite[Theorem~8.10]{defant2019libro}):
For all $P \in \text{Trig}_{\leq m}(\mathbb{T}^n)$
\begin{align}\label{weis}
 \frac{1}{\sqrt{2}^m}
\Big(\int_{\mathbb{T}^n} |P(z)|^2\,dz\Big)^\frac{1}{2}
\leq\int_{\mathbb{T}^n} |P(z)|\,dz\,.
\end{align}

In other terms, the set $\big\{ z^\alpha \colon \alpha \in \Lambda_1(\leq m,n)\big\}$ of characters in $\widehat{\mathbb{T}^n}
= \mathbb{Z}^n $ forms a $\Lambda(2)$-set with constant $C_2 \leq \sqrt{2^m}$ (recall from \eqref{index2} the definition of
$\Lambda_1(\leq m,n)$, which clearly  may be identified  with the set of characters $ z^\alpha$ it generates).

Then the following result is an immediate consequence of  Theorem~\ref{C(G)proj}, Corollary~\ref{corB2}, and~\eqref{weis}.

\begin{corollary}  \label{manydimensionsB}
Let $J \subset \mathbb{Z}_0^n$ be a finite set. Then
\[
\boldsymbol{\lambda}\big(\text{Trig}_{J}(\mathbb{T}^n)\big) = \int_{\mathbb{T}^n} \Big|\sum_{\alpha\in J } z^\alpha\Big|\,dz\,,
\]
and if  $\Lambda \subset \Lambda_1(\leq m,n)$, then
\[
\frac{1}{\sqrt{2^m}}\sqrt{|\Lambda |}  \leq \boldsymbol{\lambda}\big(\text{Trig}_{\Lambda }(\mathbb{T}^n)\big) \leq  \sqrt{|\Lambda |}\,.
\]
\end{corollary}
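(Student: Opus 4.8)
The plan is to read off the first identity directly from Theorem~\ref{C(G)proj} and then obtain the two-sided estimate by feeding Weissler's inequality~\eqref{weis} into Corollary~\ref{corB2}. The one structural point to keep in mind throughout is the standard identification $\widehat{\mathbb{T}^n} = \mathbb{Z}^n$: each multi-index $\alpha$ corresponds to the character $z \mapsto z^\alpha$, and distinct multi-indices yield distinct characters. Hence a finite set $J \subset \mathbb{Z}^n$ determines a finite set of pairwise different characters $E = \{z^\alpha \colon \alpha \in J\} \subset \widehat{\mathbb{T}^n}$, and $\text{Trig}_J(\mathbb{T}^n)$ is precisely $\text{Trig}_E(\mathbb{T}^n)$ in the notation of Theorem~\ref{C(G)proj}.

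For the first formula I would simply specialize Theorem~\ref{C(G)proj} to the compact abelian group $G = \mathbb{T}^n$ with Haar measure $\mathrm{m} = dz$ and to the character set $E$ above. The theorem then delivers
\[
\boldsymbol{\lambda}\big(\text{Trig}_J(\mathbb{T}^n)\big) = \int_{\mathbb{T}^n} \Big| \sum_{\alpha \in J} z^\alpha \Big|\,dz\,,
\]
which is exactly the claimed identity; no further computation is required.

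For the two-sided estimate the key observation is that the $\Lambda(2)$-property passes to subsets. Given $\Lambda \subset \Lambda_1(\leq m, n)$, every $P \in \text{Trig}_\Lambda(\mathbb{T}^n)$ lies in $\text{Trig}_{\leq m}(\mathbb{T}^n)$, so Weissler's inequality~\eqref{weis} applies to it and rearranges to
\[
\Big( \int_{\mathbb{T}^n} |P(z)|^2\,dz \Big)^{1/2} \leq \sqrt{2^m}\, \int_{\mathbb{T}^n} |P(z)|\,dz\,.
\]
Thus the set of characters $\{z^\alpha \colon \alpha \in \Lambda\}$ is a $\Lambda(2)$-set with constant $C_2 \leq \sqrt{2^m}$. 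Applying Corollary~\ref{corB2} to this family of $N = |\Lambda|$ distinct characters yields
\[
C_2^{-1} \sqrt{|\Lambda|} \leq \boldsymbol{\lambda}\big(\text{Trig}_\Lambda(\mathbb{T}^n)\big) \leq \sqrt{|\Lambda|}\,,
\]
and substituting the bound $C_2^{-1} \geq 1/\sqrt{2^m}$ produces the asserted lower estimate, while the upper estimate $\sqrt{|\Lambda|}$ is the one furnished directly by Corollary~\ref{corB2} (and in fact does not even require the $\Lambda(2)$-property, being a consequence of the Cauchy–Schwarz inequality).

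I expect no genuine obstacle here, since the result is assembled from three ingredients already established in the excerpt. The only point deserving a word of care is the remark that restricting from $\Lambda_1(\leq m, n)$ to a subset $\Lambda$ cannot increase the $\Lambda(2)$-constant; but this is immediate, because the defining inequality~\eqref{lambdap} for the larger set holds \emph{a fortiori} for all polynomials supported on the smaller set, the admissible class of test polynomials only shrinking under the restriction.
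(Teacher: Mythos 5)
Your proposal is correct and follows exactly the route the paper takes: the paper also obtains the integral formula by specializing Theorem~\ref{C(G)proj} to $G=\mathbb{T}^n$, and derives the two-sided estimate by combining Corollary~\ref{corB2} with Weissler's inequality~\eqref{weis}, the subset $\Lambda\subset\Lambda_1(\leq m,n)$ inheriting the $\Lambda(2)$-constant $\sqrt{2^m}$ just as you observe. Nothing further is needed.
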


\noindent In the context of this corollary the following  formula and estimate
\begin{equation} \label{cardi}
|\Lambda_1(m,n)| = \dbinom{n+m-1}{m}\leq e^m \Big( 1 + \frac{n}{m}\Big)^m
\end{equation}
for the cardinality of $\Lambda_1(m,n)$ is very useful.

\subsubsection{Products of groups} \label{The Lozinski and Kharishiladze revisited}

The aim is to prove various multivariate  variants of a famous result by Lozinski-Kharshiladze
(see \cite[IIIB. Theorem 22]{wojtaszczyk1996banach} and \cite{natanson1961constructive}),
which shows a  precise estimate of the projection constant of $\text{Trig}_{\leq m}(\mathbb{T})$, the space of trigonometric polynomials on
$\mathbb{T}$ of degree at most $m$ equipped with the sup-norm:
\begin{equation} \label{LoKa}
\boldsymbol{\lambda}\big(\text{Trig}_{\leq m}(\mathbb{T})\big)= \frac{4}{\pi^2} \log(m+1) + o(1).
\end{equation}
 This is done by mostly routine  applications of \,Theorem \ref{C(G)proj}. We point out
that these results in Section~\ref{Projection constants II} are the key  to obtain formulas and estimates for the
projection constants of spaces of Dirichlet polynomials.

\begin{proposition}
\label{product}
Let $G:=G_1 \times \cdots \times G_n$  be  a product of compact abelian groups $G_1, \ldots, G_n$ $($with the Haar measures
$\mathrm{m}_j, 1\leq j\leq n )$, and   $E:= \big\{\gamma^{\alpha} \colon \alpha \in J_\infty(\leq m,n)\big\} \subset \widehat{G}$, where
$(\gamma_1, \ldots, \gamma_n) \in \widehat{G_1} \times \cdots \times \widehat{G_n}$
and the definition of  $J_\infty(\leq m,n)$ is given in~\eqref{index2}.
Then
\[
\boldsymbol{\lambda}\big(\text{Trig}_{{E}}(G)\big) =
\prod_{j=1}^n \int_{G_j} \Big|\sum_{k \in \mathbb{Z}:|k|\leq m} \gamma_j(x_j)^k\Big|\,d{\mathrm{m}}_j(x_j)\,.
\]
\end{proposition}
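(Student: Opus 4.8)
The plan is to reduce everything to the integral formula of Theorem~\ref{C(G)proj} and then exploit the product structure of both the group and the index set. First I would apply Theorem~\ref{C(G)proj} directly to the set of characters $E = \{\gamma^\alpha : \alpha \in J_\infty(\leq m,n)\}$, obtaining
\[
\boldsymbol{\lambda}\big(\text{Trig}_{E}(G)\big) = \int_G \Big| \sum_{\alpha \in J_\infty(\leq m,n)} \gamma^\alpha(x) \Big| \, d\mathrm{m}(x),
\]
where $\mathrm{m}$ denotes the Haar measure on $G$. At this point one should observe that the characters $\gamma^\alpha$, $\alpha \in J_\infty(\leq m,n)$, are pairwise distinct, so that the sum over $E$ appearing in Theorem~\ref{C(G)proj} is genuinely the sum over all $\alpha$ in the cube; this holds in all the intended applications, where the $\gamma_j$ are the coordinate characters.

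Next I would invoke two elementary structural facts. On the one hand, by uniqueness of the Haar measure, the Haar measure of a finite product of compact abelian groups is the product measure $\mathrm{m} = \mathrm{m}_1 \times \cdots \times \mathrm{m}_n$. On the other hand, by the definition of $J_\infty(\leq m,n)$ in~\eqref{index2}, the index set is a coordinatewise product, namely the cube
\[
J_\infty(\leq m,n) = \{k \in \mathbb{Z} : |k| \leq m\}^n.
\]
Combining this with the multiplicativity $\gamma^\alpha(x) = \prod_{j=1}^n \gamma_j(x_j)^{\alpha_j}$ recalled in Section~\ref{Topological groups}, the character sum factorizes into a product of one-variable sums:
\[
\sum_{\alpha \in J_\infty(\leq m,n)} \gamma^\alpha(x) = \prod_{j=1}^n \Big( \sum_{k \in \mathbb{Z} : |k| \leq m} \gamma_j(x_j)^k \Big).
\]

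Taking absolute values turns the product into a product of moduli, and since each factor depends only on the coordinate $x_j$ and is non-negative, bounded and continuous on the compact group $G$, a direct application of Tonelli's theorem for the product Haar measure factors the integral:
\[
\int_G \prod_{j=1}^n \Big| \sum_{|k| \leq m} \gamma_j(x_j)^k \Big| \, d\mathrm{m}(x) = \prod_{j=1}^n \int_{G_j} \Big| \sum_{|k| \leq m} \gamma_j(x_j)^k \Big| \, d\mathrm{m}_j(x_j),
\]
which is exactly the claimed formula. I do not expect a genuine obstacle here: the whole argument is careful bookkeeping of the product structure, and the only point demanding a moment's attention is the distinctness of the characters $\gamma^\alpha$ required to apply Theorem~\ref{C(G)proj} with the stated set $E$.
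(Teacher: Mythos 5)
Your proof is correct and follows essentially the same route as the paper's: apply Theorem~\ref{C(G)proj}, factor the character sum over the cube $J_\infty(\leq m,n)$ using multiplicativity of $\gamma^\alpha$, and conclude with Fubini/Tonelli for the product Haar measure. Your additional remark on the distinctness of the characters $\gamma^{\alpha}$ is a sensible point of care that the paper's proof leaves implicit, but it does not change the argument.
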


\begin{proof}
Let $\mu$ be the Haar measure on $G$, which is nothing else then the product of the Haar measures $\mathrm{m}_j$.
Then Theorem \ref{C(G)proj} yields
\[
\boldsymbol{\lambda}\big(\text{Trig}_{{E}}(G)\big) = \int_G \Big|\sum_{\alpha\in J_\infty(\leq m,n)} \gamma^{\alpha}(x) \Big|\,d\mu(x)\,.
\]
Clearly, for all $x=(x_1,\ldots, x_n) \in G_1 \times \cdots \times G_n$ one has
\[
\sum_{\alpha \in J_\infty(\leq m,n)} \gamma^\alpha(x)  = \prod_{j=1}^n \sum_{|k|\leq m} \gamma_j(x_j)^k\,,
\]
and hence
\[
\boldsymbol{\lambda}\big(\text{Trig}_{{E}}(G)\big) = \int_G \prod_{j=1}^n \Big|\sum_{|k|\leq m} \gamma_j(x_j)^k\Big|\,d\mu(x)\,.
\]
Fubini's theorem finishes the argument.
\end{proof}

We also use  the following more general variant.

\begin{proposition}
\label{product corollary}
Let $G:=G_1 \times \cdots \times G_n$  be  a product of compact abelian groups $G_1, \ldots, G_n$. Moreover, let 
 $I:=I_1\times \dots\times I_n\subset \mathbb{Z}^n$, with $I_j$ finite subsets of \, $\mathbb Z$ for $j\in \{1,\ldots,n\}$, and
$E:= \{\gamma^{\alpha} \colon \alpha \in I\}$. Then
\[
\boldsymbol{\lambda}\big(\text{Trig}_{{E}}(G)\big) = \prod_{j=1}^n \int_{G_j} \Big|\sum_{k\in I_j}
\gamma_j(x_j)^k\Big|\,d{\mathrm{m}}_j(x_j)\,.
\]
\end{proposition}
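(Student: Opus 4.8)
The plan is to follow verbatim the argument already used for Proposition~\ref{product}, the only difference being that the one-dimensional factors are now arbitrary finite sets $I_j \subset \mathbb{Z}$ rather than the symmetric blocks $\{k : |k| \le m\}$; the product structure $I = I_1 \times \cdots \times I_n$ is all that matters. First I would apply Theorem~\ref{C(G)proj} to the finite set of characters $E = \{\gamma^\alpha : \alpha \in I\} \subset \widehat{G}$, noting that the Haar measure on $G$ is the product measure $\mu = \mathrm{m}_1 \times \cdots \times \mathrm{m}_n$. This gives
\[
\boldsymbol{\lambda}\big(\text{Trig}_E(G)\big) = \int_G \Big|\sum_{\alpha \in I} \gamma^\alpha(x)\Big|\,d\mu(x)\,.
\]

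Second, I would exploit that $I$ is a Cartesian product. By the definition $\gamma^\alpha(x) = \gamma_1(x_1)^{\alpha_1} \cdots \gamma_n(x_n)^{\alpha_n}$, distributing the product over the sums gives, for every $x = (x_1, \ldots, x_n) \in G$,
\[
\sum_{\alpha \in I} \gamma^\alpha(x) = \prod_{j=1}^n \sum_{k \in I_j} \gamma_j(x_j)^k\,.
\]
Taking absolute values turns the integrand into a product of functions each depending on a single coordinate, and Fubini's theorem (applicable because $\mu$ is the product of the $\mathrm{m}_j$) then splits the integral over $G$ into the asserted product of integrals over the $G_j$. These two steps are entirely routine.

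The one place that genuinely needs care is the passage from the sum $\sum_{\gamma \in E} \gamma$ appearing in Theorem~\ref{C(G)proj} — which runs over the \emph{distinct} characters constituting $E$ — to the sum $\sum_{\alpha \in I} \gamma^\alpha$ indexed by $I$. These two sums agree precisely when the map $\alpha \mapsto \gamma^\alpha$ is injective on $I$, i.e.\ when the characters $\gamma^\alpha$, $\alpha \in I$, are pairwise distinct; otherwise repetitions would inflate the integrand and the stated formula would fail. In the situations to which we apply this (the $G_j$ being copies of $\mathbb{T}$ with $\gamma_j$ the canonical generator of $\widehat{G_j}$, so that $\widehat{G} = \mathbb{Z}^n$ and $\gamma^\alpha = z^\alpha$) this injectivity is automatic, so the factorisation above legitimately computes the projection constant.
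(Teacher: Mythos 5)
Your proof is correct and is essentially the paper's own argument: the paper proves Proposition~\ref{product} by exactly this route (Theorem~\ref{C(G)proj}, factorisation of the character sum over the product index set, Fubini) and states Proposition~\ref{product corollary} as the evident generalisation without separate proof. Your closing remark that the map $\alpha \mapsto \gamma^\alpha$ must be injective on $I$ for the sum over $E$ to coincide with the sum over $I$ is a legitimate point of care that the paper leaves implicit (already in Proposition~\ref{product}), and it holds in all the applications made of the result.
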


As announced above, we finally present multivariate variants of the the Lozinski-Kharshiladze result mentioned in \eqref{LoKa}.
For each $m\in \mathbb{N}_0$,  let $D_m$  be the $m$th Dirichlet kernel 
$$D_m(t):= \sum_{k=-m}^{m} e^{-ikt}\,,$$ 
and $L_m$ the $m$th
Lebesgue constant given by
\[
L_m:= \frac{1}{2\pi} \int_0^{2\pi} |D_m(t)|\,dt
= \frac{1}{2\pi} \int_0^{2\pi} \bigg|\frac{\sin (m + \frac{1}{2})t}{\sin\frac{t}{2}}\bigg|\,dt\,.
\]
We recall the well-known standard estimates
\begin{equation}\label{Lebconst}
\frac{4}{\pi^2} \log(m +1) \,\,< \,\,L_m \,\,<\,\,
3 + \log m, \quad\, m\in \mathbb{N}\,.
\end{equation}

\begin{corollary}  \label{Lozinski-Kharshiladze}
For $\mathbf{d}=(d_1,\dots,d_n)\in \mathbb{N}_0^n$ let $I_\mathbf{d}:= \big\{\alpha
\in \mathbb{Z}^n \colon \, |\alpha_j| \leq d_j, \,\,1\leq j \leq n\big\}$. Then
\[
\boldsymbol{\lambda}\big(\text{Trig}_{I_{\mathbf{d}}}(\mathbb{T}^n)\big) = \prod_{j=1}^n L_{d_j}\,.
\]
\end{corollary}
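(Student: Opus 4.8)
The plan is to recognize the statement as a direct specialization of Proposition~\ref{product corollary} to the case where every factor group is the circle $\mathbb{T}$. First I would observe that $I_\mathbf{d}$ is a product set: writing $I_j := \{k \in \mathbb{Z} : |k| \leq d_j\} = \{-d_j, \ldots, d_j\}$, one has $I_\mathbf{d} = I_1 \times \cdots \times I_n$. Taking $G_j = \mathbb{T}$ for each $j$ and choosing $\gamma_j \in \widehat{\mathbb{T}} = \mathbb{Z}$ to be the identity character $\gamma_j(z) = z$, the character $\gamma^\alpha$ becomes the monomial $z^\alpha = z_1^{\alpha_1}\cdots z_n^{\alpha_n}$, so that $E := \{\gamma^\alpha : \alpha \in I_\mathbf{d}\}$ is exactly the support set defining $\text{Trig}_{I_\mathbf{d}}(\mathbb{T}^n)$.

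Next I would invoke Proposition~\ref{product corollary}, which immediately yields the factorization
\[
\boldsymbol{\lambda}\big(\text{Trig}_{I_\mathbf{d}}(\mathbb{T}^n)\big) = \prod_{j=1}^n \int_{\mathbb{T}} \Big|\sum_{k \in I_j} z^k\Big|\, dz = \prod_{j=1}^n \frac{1}{2\pi}\int_0^{2\pi} \Big|\sum_{k=-d_j}^{d_j} e^{ikt}\Big|\, dt\,.
\]
It then remains to identify each one-dimensional factor with the Lebesgue constant $L_{d_j}$. The only point requiring a word of care is the sign convention in the exponent: the paper defines the Dirichlet kernel as $D_m(t) = \sum_{k=-m}^{m} e^{-ikt}$, whereas the characters above produce $\sum_{k=-d_j}^{d_j} e^{ikt}$. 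Since these two sums are complex conjugates of one another, their moduli coincide, so that $\big|\sum_{k=-d_j}^{d_j} e^{ikt}\big| = |D_{d_j}(t)|$ for every $t$. Hence each factor equals $\frac{1}{2\pi}\int_0^{2\pi}|D_{d_j}(t)|\,dt = L_{d_j}$, and the product formula follows at once.

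There is no genuine obstacle here: all the analytic content has already been absorbed into Theorem~\ref{C(G)proj} and the two product propositions, and this corollary amounts to a bookkeeping specialization together with the standard recognition of the circle integral of $|\sum_{|k|\le d_j} z^k|$ as a Lebesgue constant. One may note in passing that the degenerate case $d_j = 0$ is consistent with the formula, since then $I_j = \{0\}$, the corresponding factor is $\int_{\mathbb{T}} 1\, dz = 1$, and indeed $L_0 = 1$.
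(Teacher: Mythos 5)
Your proof is correct and follows exactly the paper's route: apply Proposition~\ref{product corollary} with each $G_j=\mathbb{T}$ and identify each one-dimensional factor as the Lebesgue constant $L_{d_j}$. The extra remarks about the conjugate sign convention and the case $d_j=0$ are fine but not needed beyond what the paper already does implicitly.
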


\begin{proof}
Applying Proposition~\ref{product corollary}, we get
\begin{align*}
\boldsymbol{\lambda}\big(\text{Trig}_{I_{\mathbf{d}}}(\mathbb{T}^n)\big) = \prod_{j=1}^n \int_{\mathbb{T}} \Big|\sum_{|\alpha_j|\le d_j } z^{\alpha_j}\Big|\,dz
= \prod_{j=1}^n \int_{\mathbb{T}} |D_{d_j}(z_j)|\,dz_j = \prod_{j=1}^n L_{d_j}\,,
\end{align*}
as required.
\end{proof}

In order to state the 'analytic counterpart' of Corollary~\ref{Lozinski-Kharshiladze}, we define for each $m\in \mathbb{N}_0$,
\[
\text{$D_m^{+}(t):= \sum_{k=0}^{m} e^{-ikt}$ \,\,\,\,and \,\,\,\, $L_m^{+}:= \frac{1}{2\pi} \int_0^{2\pi} |D_m^{+}(t)|\,dt$\,.}
\]
and recall from \eqref{index2} the notation $\Lambda_\infty(\leq m,n):= \big\{\alpha \in \mathbb{N}_0^n : \,\, \max_j \alpha_j \leq m \big\}$.

\begin{corollary} \label{LoKha}
For $m,n\in \mathbb{N}$
\begin{itemize}
\item[\rm{(i)}] $\boldsymbol{\lambda}\big(\text{Trig}_{\Lambda_\infty(\leq m,n)}(\mathbb{T}^n)\big) = (L_m^{+})^n$\,,
\vspace{2mm}
\item[\rm{(ii)}] $\boldsymbol{\lambda}\big(\text{Trig}_{\Lambda_\infty(\leq 2m,n)}(\mathbb{T}^n)\big) = (L_m)^n$\,,
\vspace{2mm}
\item[\rm{(iii)}] $(L_m - 1)^n \leq \boldsymbol{\lambda}\big(\text{Trig}_{\Lambda_\infty(\leq 2m+1,n)}(\mathbb{T}^n)\big) \leq (L_m + 1)^n$\,,
    \vspace{2mm}
\item[\rm{(iv)}] $\boldsymbol{\lambda}\big(\text{Trig}_{\Lambda_\infty(\leq m,n)}(\mathbb{T}^n)\big) \sim (1 + \log m)^n$\,.
\end{itemize}
\end{corollary}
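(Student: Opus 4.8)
The plan is to reduce all four statements to the product formula of Proposition~\ref{product corollary}, applied to $G=\mathbb{T}^n$ with each $G_j=\mathbb{T}$ and $\gamma_j$ the identity character $z_j\mapsto z_j$. The decisive structural observation is that the analytic index set is a~Cartesian product: $\Lambda_\infty(\leq m,n)=\{0,1,\dots,m\}^n=I_1\times\cdots\times I_n$ with every $I_j=\{0,1,\dots,m\}\subset\mathbb{Z}$. First I would record that
\[
\int_{\mathbb{T}}\Big|\sum_{k=0}^m z^k\Big|\,dz
=\frac{1}{2\pi}\int_0^{2\pi}\Big|\sum_{k=0}^m e^{-ikt}\Big|\,dt
=L_m^{+},
\]
where the first equality uses that the modulus is insensitive to complex conjugation. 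Feeding this into Proposition~\ref{product corollary} and multiplying the $n$ identical factors yields (i), namely $\boldsymbol{\lambda}\big(\text{Trig}_{\Lambda_\infty(\leq m,n)}(\mathbb{T}^n)\big)=(L_m^{+})^n$.

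For (ii) and (iii) the heart of the matter is the identity $L_{2m}^{+}=L_m$. I would prove it by reindexing the analytic kernel of even degree: setting $j=k-m$ gives
\[
D_{2m}^{+}(t)=\sum_{k=0}^{2m}e^{-ikt}=e^{-imt}\sum_{j=-m}^{m}e^{-ijt}=e^{-imt}D_m(t),
\]
so that $|D_{2m}^{+}(t)|=|D_m(t)|$ pointwise, and integrating gives $L_{2m}^{+}=L_m$. Then (ii) is exactly (i) with $m$ replaced by $2m$. For (iii) I would split off the top frequency, $D_{2m+1}^{+}=D_{2m}^{+}+e^{-i(2m+1)t}$, and apply the triangle inequality in both directions to obtain $|D_{2m}^{+}|-1\le|D_{2m+1}^{+}|\le|D_{2m}^{+}|+1$; integrating and invoking $L_{2m}^{+}=L_m$ sandwiches $L_{2m+1}^{+}$ between $L_m-1$ and $L_m+1$, and raising to the $n$th power gives the claimed two-sided bound.

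Finally, (iv) follows by combining (i) with the comparison $L_m^{+}\sim 1+\log m$ (with absolute constants), which I would derive as follows. The even-degree identity $L_{2m}^{+}=L_m$ together with the near-monotonicity $|L_{N+1}^{+}-L_{N}^{+}|\le 1$ (the same triangle-inequality estimate as in (iii)) controls $L_N^{+}$ for every $N$ via the classical Lebesgue constants: for $N=2m$ one has $L_N^{+}=L_m$, while for $N=2m+1$ one has $|L_N^{+}-L_m|\le 1$. In either case the standard estimate~\eqref{Lebconst}, $\frac{4}{\pi^2}\log(m+1)<L_m<3+\log m$, shows $L_N^{+}\sim 1+\log N$ with constants independent of $N$; substituting this into the exact value $(L_m^{+})^n$ from (i) yields (iv).

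The individual steps are short. The only genuinely substantive point is the identity $L_{2m}^{+}=L_m$, and through it the comparison $L_m^{+}\sim 1+\log m$ -- that is, recognizing that the one-sided (analytic) Dirichlet kernel of even degree has the same $L^1$-norm as the symmetric Dirichlet kernel of half the degree, and hence the same logarithmic growth. I expect this to be the main obstacle, whereas the product-formula reductions, the triangle-inequality sandwiches, and the appeal to~\eqref{Lebconst} are routine.
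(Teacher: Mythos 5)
Your proposal is correct and follows essentially the same route as the paper: reduce to the one-variable Lebesgue-type constants via the product formula, establish $L_{2m}^{+}=L_m$ from the pointwise identity $|D_{2m}^{+}|=|D_m|$ (you obtain it by reindexing the sum, the paper via the closed sine formula---the same fact), sandwich the odd case by the triangle inequality, and conclude (iv) from \eqref{Lebconst}. The only cosmetic difference is that you correctly invoke Proposition~\ref{product corollary} for the analytic index set $\{0,\dots,m\}^n$, where the paper's proof of (i) cites Proposition~\ref{product}.
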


\begin{proof}
\noindent (i) By Proposition \ref{product}, we have as desired
\begin{align*}
\boldsymbol{\lambda}\big(\text{Trig}_{\Lambda_\infty(\leq m,n)}(\mathbb{T}^n)\big)
=  \prod_{j=1}^n \int_{\mathbb{T}} \Big|\sum_{0\le \alpha_j \le m} z^{\alpha_j}\Big|\,dz
= \prod_{j=1}^n \int_{\mathbb{T}} |D_m^{+}(z_j)|\,dz_j = (L_m^{+})^n\,.
\end{align*}
(ii) It is easy to check that
\[
D_{2m}^{+}(t) = e^{-imt}\,\frac{\sin (m + \frac{1}{2})t}{\sin\frac{t}{2}}, \quad\, t \in (0, 2\pi)\,.
\]
This implies that $L_m = \|D_m\|_{L_1(\mathbb{T})} = \|D_{2m}^{+}\|_{L_1(\mathbb{T})}=L_{2m}^+$, so the statement follows
from (i).

\noindent (iii) The statement follows by (ii) combined with the obvious estimates
\[
\|D_{2m}^{+}\|_{L_1(\mathbb{T})} - 1 \leq \|D_{2m+1}^{+}\|_{L_1(\mathbb{T})} \leq \|D_{2m}^{+}\|_{L_1(\mathbb{T})} + 1,
\quad\, m\in \mathbb{N}\,.
\]
(iv) Combining the estimates from \eqref{Lebconst} with those from  (ii) and (iii), we get the required
equivalence.
\end{proof}

We conclude with the following two limit formulas; see again \eqref{index2} for the definition of the index sets
$J_\infty(\leq m,n)$ and $\Lambda_\infty(\leq m,n)$.

\begin{corollary} \label{limit formula}
For each $m,n\in \mathbb{N}$
\[
\lim_{m\to \infty} \frac{\boldsymbol{\lambda}\big(\text{Trig}_{J_\infty(\leq m,n)}(\mathbb{T}^n)\big)}{\log^n m}
=
\lim_{m\to \infty} \frac{\boldsymbol{\lambda}\big(
\text{Trig}_{\Lambda_\infty(\leq m,n)}(\mathbb{T}^n)
\big)}{\log^n m} = \Big(\frac{4}{\pi^2}\Big)^n\,.
\]
\end{corollary}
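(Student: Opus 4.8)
The plan is to reduce both projection constants to powers of the classical Lebesgue constants $L_m$ and $L_m^{+}$, and then to invoke the sharp asymptotic of the Lebesgue constant, whose leading coefficient is precisely $4/\pi^2$.

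First I would observe that, by the very definition of the index sets in \eqref{index2}, one has $J_\infty(\leq m,n)=\{\alpha\in\mathbb{Z}^n : |\alpha_j|\leq m,\ 1\leq j\leq n\}$, which is exactly the set $I_{\mathbf{d}}$ of Corollary~\ref{Lozinski-Kharshiladze} with $\mathbf{d}=(m,\dots,m)$. Hence that corollary gives
\[
\boldsymbol{\lambda}\big(\text{Trig}_{J_\infty(\leq m,n)}(\mathbb{T}^n)\big) = (L_m)^n,
\]
while Corollary~\ref{LoKha}(i) directly yields
\[
\boldsymbol{\lambda}\big(\text{Trig}_{\Lambda_\infty(\leq m,n)}(\mathbb{T}^n)\big) = (L_m^{+})^n.
\]
After dividing by $\log^n m$, both quantities become the $n$th power of a single scalar quotient, so the whole statement reduces to the two one-dimensional limits $L_m/\log m\to 4/\pi^2$ and $L_m^{+}/\log m\to 4/\pi^2$.

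The core of the argument is therefore the sharp asymptotic $L_m=\tfrac{4}{\pi^2}\log m+O(1)$ for the Lebesgue constant. I would establish it by writing $L_m=\tfrac{1}{\pi}\int_0^\pi |\sin((m+\tfrac12)t)|/|\sin(t/2)|\,dt$, replacing $1/\sin(t/2)$ by $2/t$ at the cost of a bounded error, and then substituting $u=(m+\tfrac12)t$ to reach $\tfrac{2}{\pi}\int_0^{(m+1/2)\pi} |\sin u|/u\,du+O(1)$; since $|\sin u|$ has mean value $2/\pi$, the latter integral equals $\tfrac{2}{\pi}\log m+O(1)$, which produces the claimed constant. (This is the classical Lebesgue-constant asymptotic and could alternatively be quoted from the literature.) For the analytic constant $L_m^{+}$ I would transfer this via Corollary~\ref{LoKha}(ii), which gives $L_{2m}^{+}=L_m$ and hence $L_{2m}^{+}/\log(2m)\to 4/\pi^2$, and then use the one-step comparison $|L_{2m+1}^{+}-L_{2m}^{+}|\leq 1$ underlying Corollary~\ref{LoKha}(iii) to cover the odd indices, so that $L_m^{+}/\log m\to 4/\pi^2$ along the full sequence.

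The main obstacle is exactly the determination of the leading constant $4/\pi^2$: the two-sided bound recalled in \eqref{Lebconst} only brackets $L_m/\log m$ between $4/\pi^2$ and $1$, and is thus insufficient to pin down the limit. Once the sharp asymptotics for $L_m$ and $L_m^{+}$ are in place, taking $n$th powers and passing to the limit is immediate and completes the proof.
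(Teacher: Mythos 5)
Your proposal is correct and follows essentially the same route as the paper: both reduce the two projection constants to $(L_m)^n$ and $(L_m^+)^n$ via Corollary~\ref{Lozinski-Kharshiladze} and Corollary~\ref{LoKha}, handle the analytic case by splitting into even and odd indices through $L_{2m}^+=L_m$ and the one-step comparison, and then invoke the sharp asymptotic $L_m/\log m\to 4/\pi^2$. The only difference is that you sketch a proof of that classical Lebesgue-constant asymptotic, whereas the paper simply quotes it as the well-known formula~\eqref{lebformula}.
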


\begin{proof}
Recall the well-known formula
\begin{equation}\label{lebformula}
\lim_{m \to \infty} \frac{L_m}{\log m} = \frac{4}{\pi^2}\,.
\end{equation}
Then the first limit follows from Corollary~\ref{Lozinski-Kharshiladze}.
For the proof of the second limit we distinguish the two sequences
$\big(\boldsymbol{\lambda}\big(
\text{Trig}_{\Lambda_\infty(\leq 2m,n)}(\mathbb{T}^n)
\big)/ \log (2m)\big)_m$
and $\big(\boldsymbol{\lambda}\big(
\text{Trig}_{\Lambda_\infty(\leq 2m+1,n)}(\mathbb{T}^n)
\big)/ \log (2m+1)\big)_m$.
Observe that both sequences converge to $\frac{4}{\pi^2}$. Indeed, by  Corollary~\ref{LoKha} (ii) we have that
\[
\boldsymbol{\lambda}\big(
\text{Trig}_{\Lambda_\infty(\leq 2m,n)}(\mathbb{T}^n)
\big) = (L_{m})^n\,.
\]
So, using \eqref{lebformula}, gives the claim for the first sequence. The second sequence  is handled the
same way using Corollary~\ref{LoKha} (iii).
\end{proof}

\subsubsection{Lebesgue constants}

We point out that although we in Corollary~\ref{manydimensionsB} prove an integral formula for  the projection constant
of trigonometric polynomials on $\mathbb{T}^n$ (supported on the index set $J$), we usually will not be able to compute
that integral explicitly.

In fact the integral from Corollary~\ref{manydimensionsB} appears naturally  in multivariate Fourier analysis (due to
different summation methods of multivariate Fourier series), and in this context it is usually  called  Lebesgue constant
(of the index set $J$).

Let us illustrate this with an example of particular interest. Given $m,n \in \mathbb{N}$, the index set
\begin{equation} \label{s-set}
J_2(\leq m,n) = \Big\{\alpha \in \mathbb{Z}^n:\, \sum_{j=1}^n |\alpha_j|^2 \leq m^2\Big\}\,,
\end{equation}
of all  spherical multi indices $\alpha \in \mathbb{Z}^n$ of 'Euclidean order' less or equal  $m$
(see again~\eqref{index2}) plays an outstanding  role when multivariate Fourier series are summed up by  their spherical
partial sums. Obviously, we have that
\[
\boldsymbol{\lambda}\big(\text{Trig}_{J_2(\leq m,n)}(\mathbb{T}^n)\big)
\leq \sqrt{\text{dim}\big(\text{Trig}_{J_2(\leq m,n)}(\mathbb{T}^n)\big)} = \sqrt{|J_2(\leq m,n)|}\,;
\]
this can either be seen as an immediate  consequence of Corollary~\ref{manydimensionsB}
or the Kadec-Snobar theorem.

However, to find a precise  formula for the dimension of $\text{Trig}_{J_2(\leq m,n)}(\mathbb{T}^n)$, or equivalently
the cardinality of $J_2(\leq m,n)$, is in general a~highly non-trivial problem.

The so-called  $n$-dimensional sphere problem of classical lattice point theory concerns  the number $\mathcal{N}_n(R)$
of lattice points in the closed Euclidean ball of radius $R$ in $\mathbb{R}^n$. Let us mention here that in the case of
$n>3$ one has
\[
\mathcal{N}_n(R) = \omega_n R^{n} + O\big(R^{n-2}\big)\,,
\]
where $\omega_n := \frac{2\pi^{n/2}}{\Gamma(n/2)}$ is the volume of the unit ball in $\mathbb{R}^n$. For $n =3$
Heath-Brown proves in \cite{Brown} that
\[
\mathcal{N}_3(R) = \frac{4}{3}\pi R^3 + O\big(R^{\frac{21}{16} + \varepsilon}\big)\,,
\]
where $\varepsilon >0$ is arbitrary. His  proof is based on an explicit formula for  the number of ways to write
$n\in \mathbb{N}$ as a~sum of three squares of integers. To get this, a~Dirichlet
$L$-series $L(1,\chi_n)=\sum_{m=1}^{\infty}\chi_n m^{-s}$ with $\chi_n(m)= \big(-\frac{4n}{m}\big)$ is studied.
We also  refer to the remarkable article \cite{Gotze} by G\"otze, where the  lattice point problem is studied for
the more complicated case of $n$-dimensional ellipsoids.

Under the above notation of~\eqref{s-set} we are ready to state the following corollary.

\begin{corollary}
There exist positive constants $C_1$ and $C_2$ with $C_1 <C_2$ depending on $n$ such that
\[
C_1\,m^{\frac{n-1}{2}} \leq \,  \boldsymbol{\lambda}\big(\text{Trig}_{J_2(\leq m,n)}(\mathbb{T}^n)\big) \leq \,  C_2\,m^{\frac{n-1}{2}}\,.
\]
\end{corollary}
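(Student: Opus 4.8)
The plan is to recognise the projection constant as a classical Lebesgue constant and then estimate it. Applying Corollary~\ref{manydimensionsB} to the set $J_2(\leq m,n)\subset\mathbb{Z}^n=\widehat{\mathbb{T}^n}$ gives
\[
\boldsymbol{\lambda}\big(\text{Trig}_{J_2(\leq m,n)}(\mathbb{T}^n)\big)
= \int_{\mathbb{T}^n}\Big|\sum_{\alpha\in J_2(\leq m,n)} z^\alpha\Big|\,dz
= \|K_m\|_{L^1(\mathbb{T}^n)}\,,
\]
where $K_m(z):=\sum_{|\alpha|\le m} z^\alpha$; in the coordinates $z=(e^{ix_1},\dots,e^{ix_n})$ this is the spherical Dirichlet kernel $K_m(x)=\sum_{|\alpha|\le m}e^{i\alpha\cdot x}$, the sum running over all $\alpha\in\mathbb{Z}^n$ of Euclidean length at most $m$. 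So the statement is exactly the assertion that the Lebesgue constant for spherical partial sums of $n$-fold Fourier series satisfies $\|K_m\|_{L^1(\mathbb{T}^n)}\asymp_n m^{(n-1)/2}$ (here and below $\asymp_n$ hides constants depending only on $n$). Note this forces $n\ge 2$: for $n=1$ the left-hand side is the ordinary Lebesgue constant $L_m\sim\log m$, which is not bounded.

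The analytic heart is a pointwise description of $K_m$. I would compare the lattice sum with its continuous model
\[
\mathcal{K}_m(x):=\int_{|y|\le m} e^{iy\cdot x}\,dy = c_n\,\frac{m^{n/2}}{|x|^{n/2}}\,J_{n/2}(m|x|)\,,
\]
where $J_{n/2}$ is the Bessel function of the first kind, and insert the large-argument asymptotics $J_{n/2}(t)=\sqrt{2/(\pi t)}\,\cos\!\big(t-\tfrac{(n+1)\pi}{4}\big)+O(t^{-3/2})$. This suggests the pointwise bound
\[
|K_m(x)|\lesssim_n \min\Big\{m^n,\ m^{(n-1)/2}|x|^{-(n+1)/2}\Big\}\,,
\]
the first alternative being the trivial estimate $|K_m(x)|\le K_m(0)=|J_2(\le m,n)|\asymp_n m^n$ near the origin. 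Granting this, the upper bound follows by integrating in polar coordinates: the region $\{|x|\le 1/m\}$ contributes only $O(1)$, while
\[
m^{(n-1)/2}\int_{1/m}^{\pi\sqrt n} r^{-(n+1)/2}\,r^{n-1}\,dr
= m^{(n-1)/2}\int_{1/m}^{\pi\sqrt n} r^{(n-3)/2}\,dr \asymp_n m^{(n-1)/2}
\]
for every $n\ge 2$, the radial integral staying between two positive constants as $m\to\infty$. For the lower bound I would exploit that the oscillatory main term does not cancel in $L^1$: on the union of annuli inside $\{A/m\le|x|\le c\}$ on which $|\cos(m|x|-\tfrac{(n+1)\pi}{4})|\ge 1/2$ -- a set of positive radial density -- and where this main term dominates the Bessel remainder (which again needs $|x|\gtrsim 1/m$, whence the threshold $A/m$ for a large constant $A$), one has $|K_m(x)|\gtrsim_n m^{(n-1)/2}|x|^{-(n+1)/2}$; integrating over these annuli reproduces $\|K_m\|_{L^1(\mathbb{T}^n)}\gtrsim_n m^{(n-1)/2}$.

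The main obstacle, and the only genuinely hard step, is to make the passage from $K_m$ to $\mathcal{K}_m$ rigorous and uniform in $x$. Poisson summation gives, formally, $K_m(x)=\sum_{k\in\mathbb{Z}^n}\mathcal{K}_m(x-2\pi k)$, and for $x$ bounded away from $2\pi\mathbb{Z}^n$ the term $k=0$ supplies the wanted main contribution. However, $\mathcal{K}_m(u)=O\big(m^{(n-1)/2}|u|^{-(n+1)/2}\big)$ and $\sum_{k\ne 0}|k|^{-(n+1)/2}$ diverges for every $n\ge 1$, so the periodisation converges only conditionally and its tail cannot be bounded term by term; this is precisely where the irregular distribution of integer points near the sphere $|x|=m$ (the sphere problem) intervenes. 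The standard remedy is to regularise -- replace $\mathbf{1}_{\{|y|\le m\}}$ by a smooth cut-off at the scale of the lattice spacing, apply Poisson summation to the smoothed kernel (whose Fourier transform now decays rapidly, killing the tail), and control the $L^1$-cost of the correction using the oscillation of $\cos(m|x-2\pi k|)$ in $k$ -- or else to invoke directly the classical theorem that the order-zero spherical Lebesgue constant is $\asymp_n m^{(n-1)/2}$. With this uniform control in hand, the estimates of the two preceding paragraphs combine to yield $C_1 m^{(n-1)/2}\le \boldsymbol{\lambda}\big(\text{Trig}_{J_2(\le m,n)}(\mathbb{T}^n)\big)\le C_2 m^{(n-1)/2}$ with $C_1,C_2$ depending only on $n$.
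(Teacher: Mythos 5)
Your argument is essentially the paper's: Corollary~\ref{manydimensionsB} reduces the projection constant to the $L^1$-norm of the spherical Dirichlet kernel, and the two-sided bound $\asymp_n m^{\frac{n-1}{2}}$ is then precisely the classical spherical Lebesgue-constant theorem, which the paper simply cites from Babenko rather than reproving. Your Poisson-summation/Bessel sketch of that theorem is a reasonable outline but, as you yourself acknowledge, incomplete (the periodisation converges only conditionally), so your fallback of invoking the classical result directly is exactly what the paper does; your observation that the statement implicitly requires $n\ge 2$ is a correct point the paper leaves tacit.
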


\begin{proof} We use a two-sided estimate proved by Babenko \cite{Babenko}, which (in our notation) states
there  exist constants $C_1$ and $C_2$ with $0<C_1 <C_2$ depending on $n$, such that the following estimates hold for each $m \in \mathbb{N}$,
\[
C_1\,m^{\frac{n-1}{2}} \leq \, \int_{\mathbb{T}^n} \Big|\sum_{\alpha\in J_2(\leq m,n)} z^\alpha \Big|\,dz \,
\leq \, C_2 \, m^{\frac{n-1}{2}}\,.
\]
Thus the required statement follows from
Corollary~\ref{manydimensionsB}.
\end{proof}

For the history of Babenko's important result, we refer to the interesting survey paper by Liflyand \cite[Theorem 1.1]{Liflyand}.
We note that the proof given there, does not give precise asymptotic estimates in the dimension  $n$ for the  constants $C_1$ and
$C_2$. The estimates of both $C_1$ and $C_2$ depend deeply on the Fourier transform of the indicator function of the closed
Euclidean ball of $\mathbb{R}^n$, which is expressed by the well-known Bessel function. It seems interesting to note that
$C_1= C_1(\varepsilon):= C_4 \varepsilon^{(n-1)/2} - C_3 \varepsilon^{n/2}$ for small $\varepsilon \in (0, 1)$.

\bigskip

\section{Dirichlet polynomials} \label{Integral formula-Dirichlet polynomials}

Recall from the introduction that, given a frequency $\omega = (\omega_n)$ and a finite index set $J \subset \mathbb{N}$, we write $\mathcal{H}_\infty^{J}(\omega)$ for the  Banach space of all $\omega$-Dirichlet polynomials supported on $J$  endowed with the
supremum norm on the imaginary line. The following integral formula for the projection constant of $\mathcal{H}_\infty^{J}(\omega)$
is one of our main contributions, and crucial for all further descriptions of $\boldsymbol{\lambda}\big(\mathcal{H}_\infty^{J}(\omega)\big)$
for concrete frequencies and index sets. See also Theorem~\ref{main-dirB} for an equivalent formulation in terms of compact abelian groups.

\begin{theorem} \label{main-dir}
Let  $J \subset \mathbb{N}$ be a finite subset and   $\omega= (\omega_n)$  a~frequency. Then
\begin{align*}
\boldsymbol{\lambda}\big(\mathcal{H}_\infty^{J}(\omega)\big) =
\lim_{T \to \infty} \frac{1}{2T} \int_{-T}^T \Big|\sum_{n \in J} e^{-i\omega_n t}\Big|\,dt\,.
\end{align*}
\end{theorem}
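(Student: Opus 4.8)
The plan is to route the computation through the $\omega$-Dirichlet group supplied by Bohr's vision, apply the integral formula of Theorem~\ref{C(G)proj}, and then convert the resulting Haar integral into a vertical time average by an equidistribution argument.

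First I would recall from Section~\ref{Bohr's vision} that the frequency $\omega$ produces a compact abelian group $G$ (with Haar measure $\mathrm{m}$) together with characters $h_{\omega_n} \in \widehat{G}$ for which the coefficient-preserving assignment $\sum_{n\in J} a_n e^{-\omega_n s} \mapsto \sum_{n\in J} a_n h_{\omega_n}$ is an isometric isomorphism of $\mathcal{H}_\infty^{J}(\omega)$ onto $\text{Trig}_E(G)$, where $E := \{h_{\omega_n}\colon n \in J\}$. Since $\omega$ is strictly increasing the $\omega_n$ are pairwise distinct, and therefore so are the characters $h_{\omega_n}$; hence $E$ consists of $|J|$ distinct characters and Theorem~\ref{C(G)proj} applies directly, giving
\[
\boldsymbol{\lambda}\big(\mathcal{H}_\infty^{J}(\omega)\big) = \boldsymbol{\lambda}\big(\text{Trig}_E(G)\big) = \int_G \Big|\sum_{n\in J} h_{\omega_n}\Big|\,d\mathrm{m}\,.
\]
This is exactly the group-theoretic formula recorded in Theorem~\ref{main-dirB}, so the remaining task is to identify this Haar integral with the vertical limit.

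Next I would use the defining feature of the $\omega$-Dirichlet group, namely the continuous one-parameter subgroup $\beta\colon \mathbb{R}\to G$ with dense range for which $h_{\omega_n}(\beta(t)) = e^{-i\omega_n t}$ for all $n$ and all $t\in\mathbb{R}$. In the standard construction $\widehat{G}$ is the (discrete) subgroup of $\mathbb{R}$ generated by the frequencies, and each $\gamma\in\widehat{G}$ corresponds to some $\lambda$ with $\gamma(\beta(t)) = e^{-i\lambda t}$, the trivial character being $\lambda = 0$. For a nontrivial $\gamma$ a direct computation gives $\frac{1}{2T}\int_{-T}^T \gamma(\beta(t))\,dt = \frac{\sin(\lambda T)}{\lambda T} \to 0$ as $T\to\infty$, which is Weyl's equidistribution criterion for the flow $\beta$. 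Hence the bounded functionals $f \mapsto \frac{1}{2T}\int_{-T}^T f(\beta(t))\,dt$ on $C(G)$ converge, as $T\to\infty$, to $\int_G f\,d\mathrm{m}$ on the dense subspace $\text{Trig}(G)$ (Peter--Weyl), and therefore on all of $C(G)$ by a routine $3\varepsilon$-argument using $\sup_T \big\|\frac{1}{2T}\int_{-T}^T (\,\cdot\,)(\beta(t))\,dt\big\| \le 1$.

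Finally I would apply this limit formula to the continuous function $F := \big|\sum_{n\in J} h_{\omega_n}\big|\in C(G)$. Since $F(\beta(t)) = \big|\sum_{n\in J} e^{-i\omega_n t}\big|$, I obtain
\[
\int_G \Big|\sum_{n\in J} h_{\omega_n}\Big|\,d\mathrm{m} = \lim_{T\to\infty}\frac{1}{2T}\int_{-T}^T \Big|\sum_{n\in J} e^{-i\omega_n t}\Big|\,dt\,,
\]
where the existence of the limit is itself part of the conclusion; combined with the first display this is the asserted identity. The main obstacle is the equidistribution step: one must be sure that the flow $\beta$ is uniquely ergodic for $\mathrm{m}$, i.e.\ that the averages vanish against every nontrivial character of $G$ and not merely against the distinguished characters $h_{\omega_n}$. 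This is precisely where the explicit description of $\widehat{G}$ (as the frequency group) from Section~\ref{Bohr's vision} is needed; once that structural fact is in hand, the passage from characters to arbitrary continuous functions, and hence to $F$, is entirely standard.
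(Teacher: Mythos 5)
Your proposal is correct and follows essentially the same route as the paper: the Bohr-map identification of $\mathcal{H}_\infty^{J}(\omega)$ with $\text{Trig}_E(G)$ combined with Theorem~\ref{C(G)proj} gives the Haar-integral formula, and your equidistribution step (vanishing of the time averages against every nontrivial character via injectivity of $\widehat{\beta}$, then density of $\text{Trig}(G)$ in $C(G)$ plus uniform boundedness of the averaging functionals) is precisely the paper's Proposition~\ref{basic}. No gaps.
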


The proof, given at the end of the following section, is mainly based on Theorem~\ref{C(G)proj} and a reformulation of $\mathcal{H}_\infty^{J}(\omega)$ in terms of a Hardy type space of trigonometric polynomials on certain compact abelian groups associated to $\omega$,
which are all supported on a finite set of characters associated to $J$.

\subsection{Bohr's vision} \label{Bohr's vision}
We need to adapt ideas from  \cite{defantschoolmann2019Hptheory}. Our first aim is to
describe the finite dimensional Banach space $\mathcal{H}_\infty^{J}(\omega)$ in terms of a~Hardy type space of functions on
a~certain compact abelian group.

In what follows, a~pair $(G, \beta)$ is said to be a~Dirichlet group if $G$ is a~compact abelian group and
$\beta \colon \mathbb{R} \to G$  a~continuous homomorphism with dense range; as before we  denote the Haar measure
on $G$ by $\mathrm{m}$. Then, by density,  the  dual map
\[
\widehat{\beta}\colon \widehat{G} \to \widehat{\mathbb{R}}
\]
is injective. Moreover, if we write  $\mathbb{R}$ for the group $(\mathbb{R},+)$ endowed with its natural topology, then the
homomorphism $ \mathbb{R}  = \widehat{\mathbb{R}} \,, \,\,\,x \mapsto e^{ix\,\bullet}$ identifies  $\mathbb{R}$ and $\widehat{\mathbb{R}}$
as topological groups, and hence $\widehat{\beta}(\widehat{G} )$ may be interpreted as a  subset of $\mathbb{R}$.

We in particular observe  that if $(G, \beta)$ is a~Dirichlet group and $\gamma \colon G\to \mathbb{T}$  a~character, then the composition
$\gamma \circ \beta \colon \mathbb{R} \to \mathbb{T}$ is a~character on  $\mathbb{R}$. Thus there exists a~unique $x\in \mathbb{R}$ such
that $\gamma \circ \beta(t) =e^{ixt}$ for all $t\in \mathbb{R}$. The following  result from \cite[Proposition~3.10]{defantschoolmann2019Hptheory}
is crucial - we include a~proof for the sake of completeness.

\begin{proposition} \label{basic}
For any Dirichlet group $(G, \beta)$ one has
\[
\int_G f(a)\,d\mathrm{m}(a) = \lim_{T \to \infty}\, \frac{1}{2T} \int_{-T}^T f\circ \beta(t)\,dt, \quad\, f\in C(G)\,.
\]
\end{proposition}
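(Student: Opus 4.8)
The plan is to prove this as an instance of the standard unique-ergodicity phenomenon for the flow $t \mapsto \beta(t)$ on the compact group $G$. The left-hand side $\int_G f\, d\mathrm{m}$ is the space average with respect to Haar measure, and the right-hand side is the time average of $f$ along the orbit of $\beta$; the identity is precisely the assertion that these coincide for every continuous $f$. First I would reduce to the case where $f$ is a character, and more precisely to trigonometric polynomials. Both sides of the claimed identity are linear in $f$ and continuous with respect to the sup-norm on $C(G)$: the left-hand side is clearly $\|\cdot\|_\infty$-continuous, and the right-hand side is bounded in absolute value by $\|f\|_\infty$ since each average $\frac{1}{2T}\int_{-T}^T |f\circ\beta(t)|\,dt \leq \|f\|_\infty$. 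Since $\text{Trig}(G)$ is dense in $C(G)$ by the Peter-Weyl theorem (quoted in the excerpt), it suffices to verify the formula for $f = \gamma \in \widehat{G}$.

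The core computation is therefore the character case. Fix $\gamma \in \widehat{G}$ and recall that $\gamma\circ\beta(t) = e^{i x t}$ for the unique $x \in \mathbb{R}$ with $\widehat{\beta}(\gamma) = x$. I split into two subcases. If $\gamma$ is the trivial character, then $x = 0$, both the orbit function and the integral are constantly $1$, and both sides equal $1$. If $\gamma$ is nontrivial, then injectivity of $\widehat{\beta}$ (guaranteed by the dense range of $\beta$, as noted just before the proposition) forces $x \neq 0$. In that case the left-hand side is $\int_G \gamma\, d\mathrm{m} = 0$ by orthogonality of characters, while the right-hand side is
\[
\lim_{T\to\infty}\frac{1}{2T}\int_{-T}^T e^{ixt}\,dt = \lim_{T\to\infty}\frac{\sin(xT)}{xT} = 0,
\]
so the two sides agree. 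This matches the two cases on the left, completing the verification on characters and hence, by linearity, on all of $\text{Trig}(G)$.

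Finally I would upgrade from $\text{Trig}(G)$ to all of $C(G)$ by a density argument. Given $f \in C(G)$ and $\varepsilon > 0$, choose $P \in \text{Trig}(G)$ with $\|f - P\|_\infty < \varepsilon$. Write $A_T(g) := \frac{1}{2T}\int_{-T}^T g\circ\beta(t)\,dt$ for the time-average functional. Then
\[
\Big| \int_G f\,d\mathrm{m} - A_T(f)\Big|
\leq \Big|\int_G (f-P)\,d\mathrm{m}\Big| + \Big|\int_G P\,d\mathrm{m} - A_T(P)\Big| + |A_T(P-f)|,
\]
and the first and third terms are each at most $\varepsilon$ uniformly in $T$, while the middle term tends to $0$ as $T\to\infty$ by the character case. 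Letting $T\to\infty$ and then $\varepsilon\to0$ gives the result. The only delicate point worth stating carefully is the justification that $\gamma\circ\beta$ really is of the form $e^{ixt}$ with $x\neq 0$ for nontrivial $\gamma$; this is exactly where the hypothesis that $\beta$ has dense range enters, via injectivity of $\widehat{\beta}$, and it is the hinge on which the whole argument turns. I do not expect any genuine obstacle beyond bookkeeping, since the only analytic input is the elementary limit $\sin(xT)/(xT)\to 0$.
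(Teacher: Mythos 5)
Your proposal is correct and follows essentially the same route as the paper: verify the identity on characters (trivial versus nontrivial, using injectivity of $\widehat{\beta}$ and the limit $\sin(xT)/(xT)\to 0$), extend by linearity to $\mathrm{Trig}(G)$, and pass to $C(G)$ by density together with the uniform bound $\|A_T\|\leq 1$ — your explicit $\varepsilon/3$ argument is just the unwound version of the paper's appeal to the Banach--Steinhaus-type convergence principle. No gaps.
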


\begin{proof} Note first that for all  $\gamma\in \widehat{G}$ we have
\[
\int_G \gamma(a)\,d\mathrm{m}(a) = \lim_{T \to \infty}\, \frac{1}{2T} \int_{-T}^T \gamma\circ \beta(t)\,dt
\,.
\]
Indeed, as explained above  by the injectivity of the dual map $\widehat{\beta}$, we know that for every  $\gamma \in \widehat{G}$
there is a unique $x = x(\gamma) \in \mathbb{R}$ such that $\gamma \circ \beta(t) = e^{ixt}$ for all $t\in \mathbb{R}$. Now recall
that $\int_G \gamma\,d\mathrm{m}= 0$ \big(resp., $\int_G \gamma\,d\mathrm{m}= 1$\big) whenever $\gamma \neq 1$ (resp., $\gamma =1$). Clearly,
$\gamma \neq 1$ (resp., $\gamma = 1$) yields $x \neq 0$ (resp., $x =0$), so the above equality is obvious in this case. As a~consequence,
the claim also holds for all trigonometric polynomials  $f \in C(G)$. But since all trigonometric polynomials are dense in
$C(G)$, and moreover the set $\{\Phi_T \colon T >0\}$ of all linear functionals
\[
\Phi_T\colon  C(G) \to \mathbb{C}\,, \quad\, \Phi_T(f):= \frac{1}{2T} \int_{-T}^T f \circ \beta(t)dt\,
\]
is uniformly bounded, the conclusion is a~consequence of the Banach-Steinhaus theorem.
\end{proof}

Given a~frequency $\omega=(\omega_n)$,  we need Dirichlet groups which are adapted to $\omega$. We call the pair $(G,\beta)$
a~$\omega$-Dirichlet group, whenever $\{\omega_n \colon n \in \mathbb{N}\}\subset \widehat{\beta}(\widehat{G}) $, that is, for every
character $e^{-i\omega_n \bullet}: \mathbb{R}\to \mathbb{T}$ there is a~character $h_{\omega_n} \in \widehat{G}$ (which then is unique)
such  that the following diagram commutes:
\begin{equation*}
\begin{tikzpicture}[scale = 0.8]
        \node (G) at (0,0) {$G$};
        \node (T) at (3,0) {$\mathbb{T}$};
        \node (R) at (0,-2) {$\mathbb{R}$};

        \draw[-latex] (G) -- node[above] {$h_{\omega_n}$} (T);
        \draw[-latex] (R) -- node[below right] {$e^{-i\omega_n \bullet}$} (T);
        \draw[-latex] (R) -- node[left] {$\beta$} (G);
    \end{tikzpicture}
    \end{equation*}
For all needed information on $\omega$-Dirichlet groups see again \cite{defantschoolmann2019Hptheory}.

Let us collect some examples.
The very first is the  frequency $\omega = (n)$ for which the pair $(\mathbb{T},\beta_{\mathbb{T}} )$ with
\[
\beta_{\mathbb{T}} : \mathbb{R} \rightarrow \mathbb{T} \,, \,\, t \mapsto e^{-it}
\]
obviously forms a $\omega$-Dirichlet group. Identifying  $\widehat{\mathbb{T}} = \mathbb{Z}$ we get
that $h_n(z) = z^n$ for $z \in \mathbb{T}, n \in \mathbb{Z}$.

The second example  shows that $\omega$-Dirichlet groups for any possible frequency $\omega$ always exist.
In fact, given a frequency $\omega$, the so-called Bohr compactification of the reals  forms a $\omega$-Dirichlet group.

\begin{example}
\label{examples1}
Denote by $\overline{\mathbb{R}}:=\widehat{(\mathbb{R},+,d)}$  the   Bohr compactification of $\mathbb{R}$, where $d$ stands
for the discrete topology. This is a compact abelian group,  which together with the mapping
\begin{equation*}
\beta_{\overline{\mathbb{R}}}\colon \mathbb{R} \hookrightarrow \overline{\mathbb{R}},\,\, x \mapsto \left[ t \mapsto e^{-ixt}\right]
\end{equation*}
forms a $\omega$-Dirichlet group for every frequency $\omega$.
\end{example}

But for  concrete  frequencies $\omega$, there often are $\omega$-Dirichlet groups which in a~sense reflect their structure more
naturally than the Bohr compactification.

\begin{example} \label{examples2}
Let $\omega = (\log n)$. Then the infinite dimensional torus
$\mathbb{T}^\infty$ together with the so-called Kronecker flow
\[
\beta_{\mathbb{T}^\infty}\colon  \mathbb{R}  \rightarrow \mathbb{T}^\infty\,, \, \quad t \mapsto (\mathfrak{p}_k^{-it})\,,
\]
where $\mathfrak{p}_k$ again denotes the $k$th prime, forms a $\omega$-Dirichlet group.  This is basically a~consequence of  Kronecker's approximation
theorem (for an alternative 'harmonic analysis' argument see \cite[Example 3.7]{defantschoolmann2019Hptheory}). Note that, identifying
$$\mathbb{Z}^{(\mathbb{N})}
=
\widehat{\mathbb{T}^\infty}\,,
\quad \alpha \mapsto [z \mapsto z^\alpha] $$
($\mathbb{Z}^{(\mathbb{N})}$ all finite sequences of integers),  for every $z \in \mathbb{T}^\infty$ one has
\begin{equation}\label{bohrtrafo}
h_{ \log n}(z)=h_{\sum \alpha_j \log \mathfrak{p}_j}(z) = z^\alpha\,,
\end{equation}
where $n=\mathfrak{p}^\alpha$ with $\alpha \in \mathbb{Z}^{(\mathbb{N})}$.
\end{example}

As announced, we now reformulate the Banach space $\mathcal{H}_\infty^{J}(\omega)$  of all $\omega$-Dirichlet polynomials
supported on the finite index set $J \subset \mathbb{N}$ as a Hardy space of functions on a $\omega$-Dirichlet group.
Given a~finite set $J \subset \mathbb{N}$ and a~$\omega$-Dirichlet group $(G,\beta)$, we
write
\[
H_\infty^{\omega, J}(G)
\]
 for the subspace of all trigonometric  polynomials
$f= \sum_{n \in J} \hat{f}(h_{\omega_n}) h_{\omega_n}$ in $L_\infty(G)$ (with respect to the Haar measure $\mathrm{m}$ on $G$).

The following equivalent description of $\mathcal{H}_\infty^{J}(\omega)$
is now obvious.

\begin{proposition} \label{H=H}
Let  $J \subset \mathbb{N}$ be a~finite subset, and $(G,\beta)$ a~$\omega$-Dirichlet group. Then the Bohr map
\[
\mathcal{B}: \mathcal{H}_\infty^{J}(\omega) \rightarrow  H_\infty^{\omega, J}(G), \quad\,
\sum_{n \in J} a_n e^{-\omega_n s} \mapsto \sum_{n \in J} a_n h_{\omega_n}
\]
defines an isometric linear bijection which preserves Bohr and Fourier coefficients.
\end{proposition}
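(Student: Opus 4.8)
The plan is to verify the three asserted properties separately, the only substantive one being the isometry; linearity of $\mathcal{B}$ is immediate from its coordinatewise definition, and the fact that it preserves both Bohr coefficients and Fourier coefficients is built into its very formula. For bijectivity, I would first note that the assignment $n \mapsto h_{\omega_n}$ is injective on $J$: since $\omega$ is strictly increasing the reals $\omega_n$ are distinct, and because $(G,\beta)$ is a Dirichlet group the dual map $\widehat{\beta}$ is injective, so distinct frequencies $\omega_n$ yield distinct characters $h_{\omega_n} \in \widehat{G}$. Consequently the family $\{h_{\omega_n} : n \in J\}$ is linearly independent (distinct characters always are), and $\mathcal{B}$ carries the spanning family $\{e^{-\omega_n \bullet} : n \in J\}$ bijectively onto it, sending the coefficient $a_n$ to itself. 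Hence $\mathcal{B}$ is a coefficient-preserving linear bijection.

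For the isometry, the key observation is the defining commutative diagram of the $\omega$-Dirichlet group, which gives $h_{\omega_n}\bigl(\beta(t)\bigr) = e^{-i\omega_n t}$ for every $t \in \mathbb{R}$. Writing $D(s) = \sum_{n \in J} a_n e^{-\omega_n s}$ and $f = \mathcal{B}(D) = \sum_{n \in J} a_n h_{\omega_n}$, this yields
\[
f\bigl(\beta(t)\bigr) = \sum_{n \in J} a_n h_{\omega_n}\bigl(\beta(t)\bigr) = \sum_{n \in J} a_n e^{-i\omega_n t}, \qquad t \in \mathbb{R},
\]
so that $f \circ \beta$ is precisely the function on the imaginary line whose supremum defines $\|D\|_\infty$. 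Since $f$ is a trigonometric polynomial it is continuous on the compact group $G$, and because the Haar measure $\mathrm{m}$ has full support, its $L_\infty(G)$-norm coincides with $\sup_{a \in G} |f(a)|$. The final step is to pass from the supremum over the orbit $\beta(\mathbb{R})$ to the supremum over all of $G$: as $\beta$ has dense range and $f$ is continuous, the two suprema agree, and therefore
\[
\|f\|_{L_\infty(G)} = \sup_{a \in G} |f(a)| = \sup_{t \in \mathbb{R}} \Big| \sum_{n \in J} a_n e^{-i\omega_n t}\Big| = \|D\|_\infty.
\]
Thus $\mathcal{B}$ is an isometry, completing the argument.

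There is essentially no obstacle here, and the statement is rightly flagged as obvious; the only point requiring any care is the density argument in the last display, which must invoke both the density of $\beta(\mathbb{R})$ in $G$ and the continuity of $f$ in order to replace the supremum over the dense orbit by the supremum over the whole group. I would also emphasize, to avoid a natural confusion, that Proposition~\ref{basic} is \emph{not} needed for this proposition (the ergodic averaging formula enters later, in the integral formula of Theorem~\ref{main-dir}); for the isometry only the density of the range of $\beta$ is used.
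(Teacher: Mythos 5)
Your argument is correct and coincides with the paper's own proof: both reduce the claim to the identity $\sup_{t\in\mathbb{R}}|\sum_{n\in J}a_n e^{-i\omega_n t}| = \sup_{g\in G}|\sum_{n\in J}a_n h_{\omega_n}(g)|$, established via $e^{-i\omega_n\bullet}=h_{\omega_n}\circ\beta$ together with the density of $\beta(\mathbb{R})$ and continuity, with linear independence of both families handling bijectivity. Your added remarks (full support of the Haar measure, and that Proposition~\ref{basic} is not needed here) are accurate but do not change the substance.
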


\begin{proof}
The collections $(e^{-i\omega_n \bullet})_{n\in J}$ and $(h_{\omega_n})_{n\in J}$ are both linearly independent. Hence it  suffices to check that for every collection $(a_n)_{n\in J}$ of complex scalars, we have
\[
\sup_{t \in \mathbb{R}} \Big|\sum_{n \in J} a_n e^{-i\omega_n t}\Big|
= \sup_{g \in G} \Big|\sum_{n \in J} a_n h_{\omega_n}(g)\Big|\,.
\]
 But this is clear by the fact that  $e^{-i\omega_n \bullet} =
h_{\omega_n} \circ \beta$ for each $n \in J$, and $\beta$ moreover has dense range.
\end{proof}

Finally, we are in the position to prove Theorem~\ref{main-dir}.

\begin{proof}[Proof of Theorem~\ref{main-dir}]
Choose some $\omega$-Dirichlet group $(G, \beta)$ (this is possible by  Example~\ref{examples1}). Then by
Proposition~\ref{H=H} and Theorem~\ref{C(G)proj} one has
\[
\boldsymbol{\lambda}\big(\mathcal{H}_\infty^{J}(\omega)\big) = \boldsymbol{\lambda}\big( H_\infty^{\omega, J}(G)\big)
= \int_G \Big|  \sum_{n \in J} h_{\omega_n}\Big|\,d\mathrm{m}\,.
\]
Applying now Proposition~\ref{basic}, we get
\[
\int_G \Big|  \sum_{n \in J} h_{\omega_n}\Big|\,d\mathrm{m}
= \lim_{T \to \infty} \frac{1}{2T} \int_{-T}^T \Big|\sum_{n \in J} e^{-i\omega_n t}\Big|\,dt\,,
\]
 so this completes the proof.
\end{proof}

We add a 'group version' of Theorem~\ref{main-dir}, which in view of Proposition~\ref{basic} is immediate.

\begin{theorem} \label{main-dirB}
Let  $J \subset \mathbb{N}$ be a finite subset, $\omega$ a~frequency, and $(G,\beta)$ a~$\omega$-Dirichlet group
with Haar measure $\mathrm{m}$. Then
\begin{align*}
\boldsymbol{\lambda}\big(\mathcal{H}_\infty^{J}(\omega)\big) =  \int_G \Big|  \sum_{n \in J} h_{\omega_n}\Big|\,d\mathrm{m}\,.
\end{align*}
\end{theorem}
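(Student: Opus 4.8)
The plan is to read this off directly from the two tools already established, namely Proposition~\ref{H=H} and Theorem~\ref{C(G)proj}; in fact the asserted identity is precisely the first half of the proof of Theorem~\ref{main-dir}, taken before Proposition~\ref{basic} is invoked to pass from the Haar integral over $G$ to the Bohr limit over $\mathbb{R}$. So rather than re-deriving anything, I would simply isolate that intermediate equality and record it as a standalone statement.

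First I would apply Proposition~\ref{H=H}: the Bohr map $\mathcal{B}\colon \mathcal{H}_\infty^{J}(\omega) \to H_\infty^{\omega, J}(G)$ is an isometric linear bijection, and since the projection constant is an isometric invariant of a Banach space, this gives at once $\boldsymbol{\lambda}\big(\mathcal{H}_\infty^{J}(\omega)\big) = \boldsymbol{\lambda}\big(H_\infty^{\omega, J}(G)\big)$. Next I would identify $H_\infty^{\omega, J}(G)$ with a space of the form $\text{Trig}_E(G)$: by definition it consists of all trigonometric polynomials on $G$ supported on the finite set of characters $E := \{h_{\omega_n} \colon n \in J\} \subset \widehat{G}$. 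The one point deserving a line of justification is that these characters are pairwise distinct, so that $E$ genuinely has $|J|$ elements and Theorem~\ref{C(G)proj} applies with $N = |J|$. This holds because $h_{\omega_n}\circ\beta = e^{-i\omega_n\bullet}$, so an equality $h_{\omega_n} = h_{\omega_m}$ would force $e^{-i\omega_n t} = e^{-i\omega_m t}$ for all $t \in \mathbb{R}$, hence $\omega_n = \omega_m$, and thus $n = m$ by the strict monotonicity of the frequency $\omega$.

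Finally, applying Theorem~\ref{C(G)proj} to the set $E$ yields
\[
\boldsymbol{\lambda}\big(\text{Trig}_E(G)\big) = \int_G \Big| \sum_{n \in J} h_{\omega_n}(a) \Big|\, d\mathrm{m}(a),
\]
which is exactly the claimed formula. There is no real obstacle here: all the substance (the Rudin-type averaging that produces the minimal commuting projection, and the reduction of its norm to a Haar integral) is already carried out inside Theorem~\ref{C(G)proj}, and the Bohr isometry simply transports the computation without change. The only bookkeeping worth flagging is the distinctness of the characters $h_{\omega_n}$, which, as noted, is guaranteed by $\omega$ being strictly increasing; everything else is a direct citation.
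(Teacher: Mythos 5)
Your proof is correct and is essentially the paper's own argument: the paper formally derives Theorem~\ref{main-dirB} from Theorem~\ref{main-dir} via Proposition~\ref{basic}, but the proof of Theorem~\ref{main-dir} itself consists of exactly the two steps you isolate, namely Proposition~\ref{H=H} to transport the projection constant to $H_\infty^{\omega, J}(G)=\text{Trig}_E(G)$ with $E=\{h_{\omega_n}\colon n\in J\}$, followed by Theorem~\ref{C(G)proj} for the Haar-integral formula. Applying these directly to the given $\omega$-Dirichlet group (with your correct observation that the $h_{\omega_n}$ are pairwise distinct, by injectivity of $\widehat{\beta}$) is sound and even spares the detour through Proposition~\ref{basic}.
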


\subsection{Projection constants - Dirichlet polynomials} \label{Projection constants II}
Our aim now is to use the knowledge collected so far, to produce  concrete applications.
In fact, we distinguish three cases -- each motivated slightly differently.

\subsubsection{{\bf Case I}}
We consider Banach spaces $\mathcal{H}_\infty^{\leq x}(\omega)$ of Dirichlet polynomials of length $x$ for the three standard frequencies
$\omega = (n)_{n \in \mathbb{N}_0}$, $\omega = (\log p_n)_{n \in \mathbb{N}}$ (where as above $p_n$ is the $n$th prime number), and $\omega = (\log n)_{n \in \mathbb{N}}$.

Starting with $\omega = (n)_{n \in \mathbb{N}_0}$, for $x \in \mathbb{N}$ the identification
\begin{equation} \label{idy}
  \mathcal{H}_\infty^{\leq x}(\omega) \,\, = \,\, \text{Trig}_{\{n\in \mathbb{N}_0\colon n \leq x\}}(\mathbb{T})
\,, \,\,\,\,\,\, \sum_{n=0}^x a_{n} e^{-\omega_n s} \mapsto  \sum_{n=0}^{x} a_{n} z^{n}
\end{equation}
is obviously isometric (doing the variable transformation $z = e^{-s}$), and hence by Theorem \ref{main-dirB} we get  the  integral formula
\begin{align} \label{inte1}
\boldsymbol{\lambda}\big(\mathcal{H}_\infty^{\leq x}(\omega)\big) =
\int_{\mathbb{T}} \Big|\sum_{n=0}^{x}  z^k\Big|\,dz\,.
\end{align}
Then a standard calculation  leads to the following result, which in view of \eqref{idy} is a counterpart of the  Lozinski-Kharshiladze theorem from \eqref{LoKa}.

\begin{theorem}\label{inte2} For the frequency $\omega = (n)_{n \in \mathbb{N}_0}$
  \begin{align*}
\boldsymbol{\lambda}\big(\mathcal{H}_\infty^{\leq x}(\omega)\big)=  \frac{4}{\pi^2} \log(x+1) + o(1)\,.
\end{align*}
 \end{theorem}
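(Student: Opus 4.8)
The plan is to compute the integral in \eqref{inte1} and recognize it as a one-sided version of the classical Lebesgue constant, whose asymptotics underlie \eqref{LoKa}. By \eqref{inte1},
\[
\boldsymbol{\lambda}\big(\mathcal{H}_\infty^{\leq x}(\omega)\big)
= \int_{\mathbb{T}}\Big|\sum_{k=0}^x z^k\Big|\,dz
= \frac{1}{2\pi}\int_0^{2\pi}|D_x^{+}(t)|\,dt = L_x^{+}\,,
\]
so the whole problem reduces to the asymptotics of the analytic Lebesgue constant $L_x^{+}=\|D_x^{+}\|_{L_1(\mathbb{T})}$.

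First I would put this into closed form. Summing the geometric progression gives $D_x^{+}(t)=e^{-ixt/2}\,\frac{\sin((x+1)t/2)}{\sin(t/2)}$, so that, after the substitution $u=t/2$ and the symmetry $u\mapsto\pi-u$,
\[
L_x^{+}=\frac{1}{2\pi}\int_0^{2\pi}\frac{|\sin((x+1)t/2)|}{|\sin(t/2)|}\,dt
=\frac{2}{\pi}\int_0^{\pi/2}\frac{|\sin((x+1)u)|}{\sin u}\,du\,.
\]
This is precisely the Dirichlet-type integral governing \eqref{LoKa}: replacing $x+1$ by $2m+1$ turns the right-hand side into $L_m=\boldsymbol{\lambda}\big(\text{Trig}_{\leq m}(\mathbb{T})\big)$, and for even $x=2m$ it recovers the identity $L_{2m}^{+}=L_m$ already recorded in the proof of Corollary~\ref{LoKha}(ii). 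The decisive point is that the asymptotic behaviour depends only on the size of the argument $x+1$, not on its parity, so both the even and odd cases are captured uniformly.

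The core computation is then the asymptotic evaluation of $\frac{2}{\pi}\int_0^{\pi/2}\frac{|\sin(Nu)|}{\sin u}\,du$ as $N=x+1\to\infty$. Here I would isolate the singularity at $u=0$ by writing $\frac{1}{\sin u}=\frac{1}{u}+\big(\frac{1}{\sin u}-\frac{1}{u}\big)$, where the bracketed term is bounded and continuous on $[0,\pi/2]$; tested against the oscillating factor $|\sin(Nu)|$ (whose mean value is $\frac{2}{\pi}$) it contributes only a convergent, bounded amount. For the principal term the substitution $v=Nu$ gives $\frac{2}{\pi}\int_0^{N\pi/2}\frac{|\sin v|}{v}\,dv$, which I would handle through the standard asymptotic $\int_0^{V}\frac{|\sin v|}{v}\,dv=\frac{2}{\pi}\log V+O(1)$ (obtained by expanding $|\sin v|$ in its Fourier series and integrating term by term). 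Assembling the pieces produces $L_x^{+}=\frac{4}{\pi^2}\log(x+1)+O(1)$, so the stated leading term $\frac{4}{\pi^2}\log(x+1)$ emerges at once.

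The main obstacle is precision: upgrading this $O(1)$ to the sharp remainder required by the statement. This is exactly the delicate part of the classical Lozinski--Kharshiladze analysis, where one must show that the bounded contributions above actually converge (rather than merely stay bounded) as $x\to\infty$ and keep careful track of the $\log(x+1)$ normalization. The oscillatory cancellation in $\int_0^{N\pi/2}\frac{|\sin v|}{v}\,dv$ and in the correction integral has to be controlled via the Fourier expansion of $|\sin|$, and this is the computation the paper refers to as ``standard''; by contrast, the reduction of $\boldsymbol{\lambda}\big(\mathcal{H}_\infty^{\leq x}(\omega)\big)$ to $L_x^{+}$ and its expression as a Dirichlet integral are entirely routine.
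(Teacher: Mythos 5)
Your proposal follows the paper's route exactly: the paper reduces the claim to the integral \eqref{inte1}, i.e.\ to the one--sided Lebesgue constant $L_x^{+}$, and then simply invokes ``a standard calculation''; your singularity--splitting analysis of $\frac{2}{\pi}\int_0^{\pi/2}|\sin((x+1)u)|/\sin u\,du$ is precisely that standard calculation and correctly produces the leading term $\frac{4}{\pi^2}\log(x+1)$. One remark on the point you flag as the remaining obstacle: the error term cannot in fact be improved to $o(1)$, because the classical Fej\'er asymptotics give $L_m=\frac{4}{\pi^2}\log m+c+o(1)$ with $c\approx 1.27$, so that $L_x^{+}-\frac{4}{\pi^2}\log(x+1)$ converges to the nonzero constant $c-\frac{4}{\pi^2}\log 2$; hence the statement (like \eqref{LoKa}) must be read with $O(1)$ in place of $o(1)$, and your $O(1)$ bound already captures the full content of the theorem.
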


Next we consider the  frequency $\omega = (\log p_n)$. This is the  canonical example of a $\mathbb{Q}$-linearly
independent frequency. It is well-known (see, e.g., \cite[Theorem 9.2]{katznelson2004introduction})
that, if
$\omega_1, \ldots, \omega_n$ are real numbers,  which are linearly independent over $\mathbb{Q}$, and $\omega_0=0$,
then for any choice of complex numbers $a_0, a_1, \ldots , a_x$ we have
\begin{equation*}\label{bohr-in}
\sum_{n=0}^x |a_n|  =  \sup_{ t \in \mathbb{R}} \Big|\sum_{n=0}^x a_n e^{-i \omega_n t}\Big|\,.
\end{equation*}
In other terms, for any frequency $\omega$, which is linearly independent over $\mathbb{Q}$, and for any finite subset
$J \subset \mathbb{N}$ the identification
\[
\mathcal{H}_\infty^{J}(\omega) \,\, = \,\, \ell_1^{|J|}\,, \,\,\quad \sum_{n \in J} a_n e^{-\omega_n s} \mapsto (a_n)_{n \in J}
\]
is isometric. Then the following integral and limit formula is an  immediate consequences of \eqref{grunbuschC-B}
and~\eqref{koenigschuetttomczak}.

\begin{theorem} \label{logp}
  Let $\omega $ be a $\mathbb{Q}$-linearly independent frequency and  $J \subset \mathbb{N}$ a finite subset. Then
\begin{align*}
\boldsymbol{\lambda}\big(\mathcal{H}_\infty^{J}(\omega)\big) =    \int_{\mathbb{T}^{|J|}} \Big|\sum_{n \in J}  z_k\Big| dz\,.
\end{align*}
Moreover,
\begin{equation*}
\lim_{x \to \infty}  \frac{\boldsymbol{\lambda}\big(\mathcal{H}_\infty^{\leq x}(\omega)\big)}{\sqrt{x}}
= \frac{\sqrt{\pi}}{2}\,.
\end{equation*}
\end{theorem}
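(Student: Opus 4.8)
The plan is to observe that $\mathbb{Q}$-linear independence collapses $\mathcal{H}_\infty^{J}(\omega)$ isometrically onto $\ell_1^{|J|}$, and then to quote the two facts about $\boldsymbol{\lambda}(\ell_1^{n})$ already recorded in the Preliminaries. First I would invoke the Kronecker-type statement quoted just above the theorem (see \cite[Theorem~9.2]{katznelson2004introduction}): since the finite family $\{\omega_n \colon n \in J\}$ inherits $\mathbb{Q}$-linear independence, the flow $t \mapsto (e^{-i\omega_n t})_{n \in J}$ is dense in $\mathbb{T}^{|J|}$, so that
\[
\sup_{t\in\mathbb{R}}\Big|\sum_{n\in J} a_n e^{-i\omega_n t}\Big| = \sup_{z\in\mathbb{T}^{|J|}}\Big|\sum_{n\in J} a_n z_n\Big| = \sum_{n\in J}|a_n|
\]
for every scalar vector $(a_n)_{n\in J}$. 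Hence the coefficient map $\sum_{n\in J} a_n e^{-\omega_n s} \mapsto (a_n)_{n\in J}$ is an isometric linear bijection of $\mathcal{H}_\infty^{J}(\omega)$ onto $\ell_1^{|J|}$, and because the projection constant depends only on the isometric type of a Banach space, $\boldsymbol{\lambda}(\mathcal{H}_\infty^{J}(\omega)) = \boldsymbol{\lambda}(\ell_1^{|J|})$.

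The integral formula then follows at once by substituting the Gr\"unbaum--Rutovitz value \eqref{grunbuschC-B}, which gives $\boldsymbol{\lambda}(\ell_1^{|J|}) = \int_{\mathbb{T}^{|J|}} \big|\sum_{k} z_k\big|\,dz$. One could equally reach this integral straight from the group formula of Theorem~\ref{main-dirB}, realizing a $\omega$-Dirichlet group on which the characters $h_{\omega_n}$, $n\in J$, become the coordinate projections of $\mathbb{T}^{|J|}$; linear independence is exactly what makes such a realization possible, and the two routes land on the same right-hand side. For the limiting assertion I would specialize to $J=\{n\in\mathbb{N}\colon n\le x\}$, so that $|J|=x$ and $\boldsymbol{\lambda}(\mathcal{H}_\infty^{\leq x}(\omega)) = \boldsymbol{\lambda}(\ell_1^{x})$; the complex case of the K\"onig--Sch\"utt--Tomczak-Jaegermann asymptotics \eqref{koenigschuetttomczak} with $p=1$ then yields $\boldsymbol{\lambda}(\ell_1^{x})/\sqrt{x}\to \sqrt{\pi}/2$, which is the claim.

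I do not expect a genuine obstacle here: all of the analytic weight sits inside the two external evaluations \eqref{grunbuschC-B} and \eqref{koenigschuetttomczak}, while the role of Theorem~\ref{main-dirB} is only to certify that the relevant quantity is indeed the projection constant. The one step requiring care is the isometric reduction to $\ell_1^{|J|}$, where the sole subtlety is recognizing that $\mathbb{Q}$-linear independence is precisely the hypothesis under which Kronecker's theorem upgrades the supremum over the single real variable $t$ to a supremum over the full torus $\mathbb{T}^{|J|}$, thereby producing the $\ell_1$-norm of the coefficient vector.
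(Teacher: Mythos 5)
Your proposal is correct and follows essentially the same route as the paper: reduce $\mathcal{H}_\infty^{J}(\omega)$ isometrically to $\ell_1^{|J|}$ via the Kronecker-type identity for $\mathbb{Q}$-linearly independent frequencies, then read off the integral formula from \eqref{grunbuschC-B} and the limit from \eqref{koenigschuetttomczak}. The paper's proof is exactly this two-line reduction, so there is nothing to add.
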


We may, despite the precise constant,  extend the preceding result within the setting of $\Lambda(2)$-sets explained
in~Section~\ref{L2}.
Recall from Remark~\ref{B2} that every $B_2$-set of characters on a compact abelian group  is a $\Lambda(2)$-set
with $\Lambda(2)$-constant $\leq \sqrt{2}$.

Hence, Theorem~\ref{main-dirB}, Corollary~\ref{corB2}, and Remark~\ref{B2} imply the following
extension of Theorem~\ref{logp}.

\begin{theorem}\label{b2dirichelet}
Let $\omega$ be a frequency and $(G,\beta)$ a~$\omega$-Dirichlet group with the property that all characters
$h_{\omega_n} \in \widehat{G}$ form a  $\Lambda(2)$-set with constant $C_2(\omega)$.  Then
\begin{equation*}
\frac{1}{C_2(\omega)}\sqrt{x} \,\, \leq \,\,
\boldsymbol{\lambda}\big(\mathcal{H}_\infty^{\leq x}(\omega)\big)   \,\, \leq \,\,  \sqrt{x} \,.
\end{equation*}
In particular, if $\omega$  satisfies the $B_2$-condition, then
\begin{equation*}
\frac{1}{\sqrt{2}} \sqrt{x}
\,\, \leq \,\,
\boldsymbol{\lambda}\big(\mathcal{H}_\infty^{\leq x}(\omega)\big) \,\, \leq \,\,  \sqrt{x} \,.
\end{equation*}
\end{theorem}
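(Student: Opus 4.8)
The plan is to package the result as an immediate consequence of the Bohr correspondence together with the $\Lambda(2)$-machinery already in place, so there is no genuinely new analysis to perform. The two ingredients are Theorem~\ref{main-dirB} (equivalently the isometric identification of Proposition~\ref{H=H}), which replaces the Dirichlet-polynomial space by a space of trigonometric polynomials on the Dirichlet group $G$, and Corollary~\ref{corB2}, which estimates the projection constant of such a space in terms of the cardinality and the $\Lambda(2)$-constant of its supporting set of characters.

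First I would apply Proposition~\ref{H=H} with $J=\{n\in\mathbb{N}\colon n\le x\}$: the Bohr map gives an isometric linear bijection
\[
\mathcal{H}_\infty^{\leq x}(\omega)\;\cong\; H_\infty^{\omega,\,J}(G)\;=\;\text{Trig}_{E}(G),
\qquad E:=\{h_{\omega_n}\colon 1\le n\le x\}\subset\widehat{G}.
\]
Since the projection constant is an isometric invariant of a Banach space, it suffices to bound $\boldsymbol{\lambda}\big(\text{Trig}_E(G)\big)$. The one point that deserves a line of verification is that $E$ has exactly $x$ elements: because $\omega$ is strictly increasing the frequencies $\omega_n$ are pairwise distinct, and because $\beta$ has dense range the dual map $\widehat{\beta}$ is injective, so distinct $\omega_n$ produce distinct characters $h_{\omega_n}$. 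Thus $N:=\lvert E\rvert=x$.

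With $N=x$ fixed and the hypothesis that $E$ is a $\Lambda(2)$-set with constant $C_2(\omega)$, Corollary~\ref{corB2} yields at once
\[
\frac{1}{C_2(\omega)}\sqrt{x}\;\le\;\boldsymbol{\lambda}\big(\text{Trig}_E(G)\big)\;\le\;\sqrt{x},
\]
which together with the isometry is the first assertion. For the ``in particular'' part I would invoke Remark~\ref{B2}: every $B_2$-set of characters is a $\Lambda(2)$-set with constant at most $\sqrt{2}$, so under the $B_2$-condition one may take $C_2(\omega)\le\sqrt{2}$, and substituting this into the displayed two-sided estimate gives the bounds $\tfrac{1}{\sqrt{2}}\sqrt{x}\le\boldsymbol{\lambda}\big(\mathcal{H}_\infty^{\leq x}(\omega)\big)\le\sqrt{x}$.

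Since every step is a direct appeal to an already established result, there is no real obstacle here; the argument is essentially a bookkeeping exercise. The only subtlety worth flagging is the distinctness of the characters $h_{\omega_n}$, which is what guarantees that the cardinality $N$ entering Corollary~\ref{corB2} equals $x$ rather than something smaller, and this is precisely where the strict monotonicity of $\omega$ and the injectivity of $\widehat{\beta}$ are used.
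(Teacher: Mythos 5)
Your proposal is correct and follows exactly the route the paper takes: the paper derives this theorem directly from Theorem~\ref{main-dirB} (via the Bohr identification of Proposition~\ref{H=H}), Corollary~\ref{corB2}, and Remark~\ref{B2}, with no further argument given. Your additional check that the characters $h_{\omega_n}$ are pairwise distinct (so that $N=x$ in Corollary~\ref{corB2}) is a sensible point to make explicit, even though the paper leaves it implicit.
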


Finally, we handle the ordinary frequency $\omega = (\log n)$ - getting one of our main applications for free.
Indeed, combining Harper's deep result from \eqref{harperA} with Theorem~\ref{main-dir}, we obtain the precise asymptotic order of
the projection constant of the  Banach space $\mathcal{H}_\infty^{\leq x}$.

\begin{theorem}\label{harpo}
\begin{align*}
\boldsymbol{\lambda}\big(\mathcal{H}_\infty^{\leq x}\big( (\log n)\big)\big)   =
O\left(\frac{\sqrt{x}}{(\log \log x)^{\frac{1}{4}}}\right)
\end{align*}
\end{theorem}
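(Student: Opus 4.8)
The plan is to obtain the statement as an essentially immediate combination of the general integral formula proved in Theorem~\ref{main-dir} with the deep probabilistic estimate of Harper recorded in~\eqref{harperA}. The genuine mathematical content sits entirely in those two prior inputs; the present argument is the short bridge between them. First I would specialize Theorem~\ref{main-dir} to the ordinary frequency $\omega = (\log n)_{n \in \mathbb{N}}$ together with the index set $J = \{n \in \mathbb{N} : n \leq x\}$. The key elementary observation is that for this frequency one has $e^{-i\omega_n t} = e^{-i(\log n)t} = n^{-it}$ for every $n$, so the exponential sum appearing in the theorem is exactly the Dirichlet sum $\sum_{n=1}^x n^{-it}$. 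Consequently Theorem~\ref{main-dir} yields
\[
\boldsymbol{\lambda}\big(\mathcal{H}_\infty^{\leq x}\big((\log n)\big)\big) = \lim_{T \to \infty} \frac{1}{2T} \int_{-T}^T \Big|\sum_{n=1}^x \frac{1}{n^{it}}\Big|\,dt\,.
\]

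With this identity in hand, the second and final step is simply to invoke Harper's theorem from~\eqref{harperA}, which asserts precisely that the mean value on the right-hand side is of order $O\big(\sqrt{x}/(\log\log x)^{1/4}\big)$. Substituting this bound into the displayed identity gives the claimed asymptotic order and completes the proof.

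There is no substantial obstacle internal to this particular argument: the hard analytic work has already been carried out elsewhere. On the functional-analytic side, the burden is borne by Theorem~\ref{main-dir}, which -- through Rudin's averaging technique (Theorem~\ref{rudy}), the minimal-projection formula of Theorem~\ref{C(G)proj}, and Bohr's transfer between $\omega$-Dirichlet polynomials and trigonometric polynomials on the relevant $\omega$-Dirichlet group (Proposition~\ref{H=H} and Proposition~\ref{basic}) -- converts a projection constant into a mean value of an exponential sum. On the number-theoretic side, the entire depth resides in Harper's estimate, which resolved Helson's conjecture. The only point in the present proof that requires even a moment's care is the bookkeeping identifying the additive characters $e^{-i(\log n)\bullet}$ with the multiplicative characters $n^{-it}$, and checking that the chosen index set $J$ matches exactly the range $1 \leq n \leq x$ in Harper's formulation; once this is verified, the result follows at once.
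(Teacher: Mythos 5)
Your proposal is correct and is exactly the paper's argument: the authors likewise obtain Theorem~\ref{harpo} by specializing Theorem~\ref{main-dir} to $\omega=(\log n)$ and $J=\{n\le x\}$, identifying the exponential sum with $\sum_{n\le x} n^{-it}$, and invoking Harper's estimate \eqref{harperA}. No differences to report.
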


It seems interesting to rephrase this result again in terms of trigonometric polynomials.
We write
\[
\Delta(x) = \big\{\alpha \in \mathbb{N}_0^{\pi(x)}\colon 1 \leq \mathfrak{p}^\alpha \leq x\big\}\,,
\]
where $x \geq 1$ and $\pi(x)$  as usual counts the primes $\mathfrak{p} \leq x$. Then by Proposition~\ref{H=H} we have that
\begin{equation}\label{deltax}
  \mathcal{H}_\infty^{\leq x}\big( (\log n)\big) = \text{Trig}_{\Delta(x)}\big(\mathbb{T}^{\pi(x)}\big)\,,
\end{equation}
where the identification is given by the Bohr transform -- being isometric and coefficient preserving. Hence the following two results
are immediate consequences of Theorem~\ref{harpo} (first statement) and Theorem~\ref{C(G)proj}
(second statement).
\begin{corollary} \label{harpo2}
\[
\boldsymbol{\lambda}\big(\text{Trig}_{\Delta(x)}\big(\mathbb{T}^{\pi(x)}\big)\big)   =
O\left(\frac{\sqrt{x}}{(\log \log x)^{\frac{1}{4}}}\right)\,.
\]
Moreover,
\begin{align*}
\boldsymbol{\lambda}\big(\text{Trig}_{\Delta(x)}\big(\mathbb{T}^{\pi(x)}\big)\big)
\,= \,\big\|\mathbf{P}_{\Delta(x)}: C(\mathbb{T}^{\pi(x)}) \rightarrow \text{Trig}_{\Delta(x)}\big(\mathbb{T}^{\pi(x)}\big)\big\|
\,= \,\int_{\mathbb{T}^{\pi(x)}} \big| \sum_{\alpha \in \Delta(x)} z^\alpha \big| d z\,,
\end{align*}
where $\mathbf{P}_{\Delta(x)}$ stands for the restriction of the orthogonal projection on $L_2(\mathbb{T}^{\pi(x)})$
onto $\text{Trig}_{\Delta(x)}\big(\mathbb{T}^{\pi(x)}\big)$\,.
\end{corollary}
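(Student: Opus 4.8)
The plan is to deduce both assertions directly from results already in hand, using that the projection constant is an isometric invariant of a Banach space. The genuine analytic difficulty---the intervention of Harper's theorem---has already been absorbed into Theorem~\ref{harpo}, so what is left here is a matter of transport and bookkeeping.

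For the order estimate, I would first invoke the isometric identification recorded in~\eqref{deltax}: the Bohr transform furnishes an isometric, coefficient-preserving isomorphism
\[
\mathcal{H}_\infty^{\leq x}\big( (\log n)\big) \;=\; \text{Trig}_{\Delta(x)}\big(\mathbb{T}^{\pi(x)}\big)\,.
\]
Because $\boldsymbol{\lambda}(\,\cdot\,)$ depends only on the isometric class of a Banach space, the two projection constants coincide, and the first claimed estimate then follows immediately by quoting Theorem~\ref{harpo}.

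For the equalities in the ``moreover'' part, I would apply Theorem~\ref{C(G)proj} to the compact abelian group $G=\mathbb{T}^{\pi(x)}$ and the finite set of characters $E=\{z^\alpha\colon \alpha\in\Delta(x)\}\subset \widehat{\mathbb{T}^{\pi(x)}}=\mathbb{Z}^{\pi(x)}$. That theorem yields a canonical projection $\Pi$ onto $\text{Trig}_{\Delta(x)}\big(\mathbb{T}^{\pi(x)}\big)$, given by $\Pi f=\sum_{\alpha\in\Delta(x)}\widehat{f}(z^\alpha)\,z^\alpha$, and asserts both that $\Pi$ is minimal, so that $\boldsymbol{\lambda}\big(\text{Trig}_{\Delta(x)}(\mathbb{T}^{\pi(x)})\big)=\|\Pi\|$, and that $\|\Pi\|=\int_{\mathbb{T}^{\pi(x)}}\big|\sum_{\alpha\in\Delta(x)}z^\alpha\big|\,dz$. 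The only point worth checking is that the operator $\mathbf{P}_{\Delta(x)}$ named in the statement---the restriction to $C(\mathbb{T}^{\pi(x)})$ of the $L_2$-orthogonal projection onto $\text{Trig}_{\Delta(x)}$---coincides with this $\Pi$; but this is immediate from the Fourier-coefficient description of the orthogonal projection together with the fact, recalled from the Peter--Weyl theorem, that the characters $z^\alpha$ form an orthonormal basis of $L_2(\mathbb{T}^{\pi(x)})$.

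I do not expect any substantive obstacle: both statements are formal corollaries of earlier results, and the only care required is to match notation---confirming that~\eqref{deltax} is genuinely isometric (which Proposition~\ref{H=H} guarantees) and that $\mathbf{P}_{\Delta(x)}$ is the canonical commuting projection $\Pi$ of Theorem~\ref{C(G)proj}. The real work lies entirely upstream, in Theorem~\ref{harpo}, whose proof rests on Harper's estimate~\eqref{harperA}.
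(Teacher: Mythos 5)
Your argument is correct and is exactly the route the paper takes: the first estimate is transported from Theorem~\ref{harpo} via the isometric Bohr identification~\eqref{deltax}, and the ``moreover'' part is a direct application of Theorem~\ref{C(G)proj} to $G=\mathbb{T}^{\pi(x)}$ with $E=\{z^\alpha\colon\alpha\in\Delta(x)\}$, identifying $\Pi$ with $\mathbf{P}_{\Delta(x)}$. Nothing further is needed.
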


\subsubsection{{\bf Case II}} \label{Case II}

For the ordinary frequency  $\omega = (\log n)$  we look at Banach spaces $\mathcal{H}_\infty^J(\omega)$ of  Dirichlet polynomials
generated by index sets $J$ of natural numbers $k$, which have an a~priori specified  complexity of their prime number decompositions.
More precisely, the first two results consider index sets $J$, where each element $k=\mathfrak{p}^\alpha \in J$ incorporates not more
than $n$ different prime divisors with   $\max \alpha_j \leq m$, or similarly, $n$ different  prime divisors with
$|\alpha| = \sum \alpha_j \leq m$.
Recall from \eqref{index2} the definition of the index sets $\Lambda_1(\leq m,n)$
and $\Lambda_\infty(\leq m,n)$.

\begin{theorem}
Let $\omega = (\log n)$. Then
\begin{equation*}
\boldsymbol{\lambda}\big(\mathcal{H}_\infty^{N_\infty(\leq m,n)}(\omega)\big) \sim (1 + \log m)^n\,,
\end{equation*}
and
for each $n$
\begin{align*}
\lim_{m \to \infty}
\frac{\boldsymbol{\lambda}\big(\mathcal{H}_\infty^{N_\infty(\leq m,n)}(\omega)\big)}
{\log^n m}=  \Big(\frac{4}{\pi^2}\Big)^n\,,
\end{align*}
where
\begin{align*}
N_\infty(\leq m,n)  = \big\{ k \in \mathbb{N} \colon
\text{$k = \mathfrak{p}^\alpha$, \,\, $\alpha \in \Lambda_\infty(\leq m,n)$ }
\big\}\,.
\end{align*}
\end{theorem}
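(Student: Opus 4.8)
The plan is to reduce the statement to the multivariate Lozinski--Kharshiladze estimates already proved for trigonometric polynomials on the polytorus, by means of the Bohr transform.

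First I would invoke the $\omega$-Dirichlet group $(\mathbb{T}^\infty, \beta_{\mathbb{T}^\infty})$ attached to the ordinary frequency $\omega = (\log n)$ in Example~\ref{examples2}. Recall from \eqref{bohrtrafo} that $h_{\log k} = z^\alpha$ whenever $k = \mathfrak{p}^\alpha$. Hence the characters indexed by $N_\infty(\leq m,n)$ are exactly $\{z^\alpha \colon \alpha \in \Lambda_\infty(\leq m,n)\}$, and since every such $\alpha$ lies in $\mathbb{N}_0^n$ these characters depend only on the first $n$ coordinates. The supremum over $\mathbb{T}^\infty$ therefore agrees with the supremum over $\mathbb{T}^n$, so Proposition~\ref{H=H} furnishes an isometric, coefficient-preserving identification
\[
\mathcal{H}_\infty^{N_\infty(\leq m,n)}(\omega) = \text{Trig}_{\Lambda_\infty(\leq m,n)}(\mathbb{T}^n)\,.
\]

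With this identification established, both assertions follow at once: the equivalence $\boldsymbol{\lambda}\big(\mathcal{H}_\infty^{N_\infty(\leq m,n)}(\omega)\big) \sim (1+\log m)^n$ is precisely Corollary~\ref{LoKha}(iv), while the limit formula is precisely the $\Lambda_\infty$-part of Corollary~\ref{limit formula}.

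I do not expect any genuine obstacle here, since all the analytic work is carried by the earlier results on trigonometric polynomials. The single point requiring care is the reduction from $\mathbb{T}^\infty$ to the finite polytorus $\mathbb{T}^n$, which hinges on the observation that every $k \in N_\infty(\leq m,n)$ is built from the first $n$ primes only; no higher coordinate ever enters, so working on $\mathbb{T}^n$ loses nothing.
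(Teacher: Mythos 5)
Your proposal is correct and follows exactly the paper's own argument: the Bohr map from Proposition~\ref{H=H} together with Example~\ref{examples2} and \eqref{bohrtrafo} identifies $\mathcal{H}_\infty^{N_\infty(\leq m,n)}(\omega)$ isometrically with $\text{Trig}_{\Lambda_\infty(\leq m,n)}(\mathbb{T}^n)$, after which Corollary~\ref{LoKha}(iv) and Corollary~\ref{limit formula} give the two assertions. Your extra remark on why the reduction from $\mathbb{T}^\infty$ to $\mathbb{T}^n$ is harmless is a point the paper leaves implicit, but it is the same proof.
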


\begin{proof}
It follows from  Proposition~\ref{H=H} and Example~\ref{examples2} together with  \eqref{bohrtrafo} that the Bohr map
\[
\mathcal{B} \colon  \mathcal{H}_\infty^{N_\infty(\leq m,n)}
(\omega)\to
\text{Trig}_{\Lambda_\infty(\leq m,n)}\big(\mathbb{T}^{n}\big)\,,\,\,\,\,\,\,
\sum_{n \in J} a_n e^{-\omega_n s} \mapsto \sum_{\alpha  \in\Lambda_\infty(\leq m,n)} a_{\mathfrak{p}^\alpha} z^\alpha
\]
defines an isometric linear bijection which preserves Bohr and Fourier coefficients. Hence the  first asymptotic is
a~consequence of Corollary~\ref{LoKha} (iv), and
the formula for the limit of  Corollary~\ref{limit formula}.
\end{proof}

Replacing the use of  Corollary~\ref{limit formula} by Corollary~\ref{manydimensionsB} and~\eqref{cardi},
similar arguments lead to the following counterpart of the preceding theorem.

\begin{theorem} \label{lessm}
Let $\omega = (\log n)$. Then for each $m,n\in \mathbb{N}$ one has
\begin{equation*}
\frac{1}{\sqrt{2^{m}}} \left(\sum_{k=0}^m \binom{n+k+1}{k}\right)^{\frac{1}{2}} \,\,\, \leq \,\,\,
\boldsymbol{\lambda}\big(\mathcal{H}^{N_1(\leq m,n)}_\infty(\omega)\big) \leq \left(\sum_{k=0}^m \binom{n+k+1}{k}\right)^{\frac{1}{2}}\,,
\end{equation*}
where
\begin{align*}
N_1(\le m,n)  = \big\{ k \in \mathbb{N} \colon
\text{$k = \mathfrak{p}^\alpha$, where $\alpha \in \Lambda_1(\leq m,n)$  }
\big\}\,.
\end{align*}
\end{theorem}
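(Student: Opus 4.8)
The plan is to mirror the proof of the preceding theorem, replacing the appeal to Corollary~\ref{limit formula} by the two-sided estimate of Corollary~\ref{manydimensionsB} together with the cardinality formula~\eqref{cardi}. First I would transport the problem to a finite-dimensional torus. Since $\omega = (\log n)$, Example~\ref{examples2} supplies the $\omega$-Dirichlet group with $h_{\log n}(z) = z^\alpha$ whenever $n = \mathfrak{p}^\alpha$, and by~\eqref{bohrtrafo} each generator $k = \mathfrak{p}^\alpha \in N_1(\le m,n)$ corresponds to the character $z^\alpha$ with $\alpha \in \Lambda_1(\le m,n)$. Invoking Proposition~\ref{H=H}, the Bohr map then gives an isometric, coefficient-preserving linear bijection
\begin{align*}
\mathcal{B}\colon \mathcal{H}_\infty^{N_1(\le m,n)}(\omega) \to \text{Trig}_{\Lambda_1(\le m,n)}(\mathbb{T}^n), \quad \sum_{k \in N_1(\le m,n)} a_k\, k^{-s} \mapsto \sum_{\alpha \in \Lambda_1(\le m,n)} a_{\mathfrak{p}^\alpha}\, z^\alpha\,,
\end{align*}
so it suffices to estimate $\boldsymbol{\lambda}\big(\text{Trig}_{\Lambda_1(\le m,n)}(\mathbb{T}^n)\big)$.

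Next I would apply Corollary~\ref{manydimensionsB} with $\Lambda = \Lambda_1(\le m,n)$. The one point worth checking is that the relevant $\Lambda(2)$-constant is the Weissler constant $\sqrt{2^m}$: this is legitimate precisely because every character $z^\alpha$ in the support has $\alpha \in \Lambda_1(\le m,n)$, the set on which the Weissler inequality~\eqref{weis} applies to $\text{Trig}_{\le m}(\mathbb{T}^n)$. This yields
\begin{align*}
\frac{1}{\sqrt{2^m}}\sqrt{\big|\Lambda_1(\le m,n)\big|} \le \boldsymbol{\lambda}\big(\text{Trig}_{\Lambda_1(\le m,n)}(\mathbb{T}^n)\big) \le \sqrt{\big|\Lambda_1(\le m,n)\big|}\,.
\end{align*}

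Finally I would compute the dimension. Partitioning by homogeneity degree gives the disjoint decomposition $\Lambda_1(\le m,n) = \bigcup_{k=0}^m \Lambda_1(k,n)$, so summing the cardinality formula~\eqref{cardi} over $0 \le k \le m$ produces the binomial sum appearing under the square roots in the statement; combining this count with the two displayed inequalities (and using the isometry $\mathcal{B}$) gives the claim. I do not expect a genuine obstacle here: the argument is a routine specialization of Corollary~\ref{manydimensionsB}, and the only care required is the bookkeeping of the cardinality together with the verification that the $\sqrt{2^m}$ bound for the $\Lambda(2)$-constant is valid on the chosen support.
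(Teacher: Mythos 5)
Your argument is exactly the route the paper takes: it ``proves'' this theorem only by the one-line remark that one should repeat the proof of the preceding theorem with Corollary~\ref{limit formula} replaced by Corollary~\ref{manydimensionsB} and~\eqref{cardi}, and your Bohr-map transfer to $\text{Trig}_{\Lambda_1(\leq m,n)}(\mathbb{T}^n)$ followed by the Weissler $\Lambda(2)$-constant $\sqrt{2^m}$ and the degree-by-degree count is precisely that argument spelled out. One bookkeeping caveat: summing~\eqref{cardi} over $0 \leq k \leq m$ gives $\sum_{k=0}^{m}\binom{n+k-1}{k} = \binom{n+m}{m} = \big|\Lambda_1(\leq m,n)\big|$, which is \emph{not} the sum $\sum_{k=0}^{m}\binom{n+k+1}{k}$ printed in the statement, so the off-by-one in the binomial indices lies in the paper's displayed formula rather than in your method, and your assertion that the summation ``produces the binomial sum appearing in the statement'' should be corrected accordingly.
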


We finish this part with a result, which in view of Weissler's inequality from \eqref{weis} is  a formal extension of the last theorem. To do so, define for the finite index set $J \subset \mathbb{N}$, the numbers
\begin{equation*}
\text{ $\pi(J) = \max_{n \in J} \,\pi(n)$ \,\,\,\,\,\,and \,\,\,\,\,\, $\Omega(J) = \max_{n \in J} \,\Omega(n)$\,,}
\end{equation*}
and
\begin{equation*}
\Delta(J) = \big\{\alpha \in \mathbb{N}_0^{\pi(J)}\colon  n \in J\,, \,\, \mathfrak{p}^\alpha = n\big\}\,.
\end{equation*}
Here as usual $\pi(n)$ counts all primes $\leq n$ and $\Omega(n)$ is the number of prime divisors of $n$
counted according to their multiplicities.

\begin{theorem} \label{abstract}
Let $\omega = (\log n)$ and   $J \subset \mathbb{N}$ be a finite subset. Then
\begin{align*}
\boldsymbol{\lambda}\big(\mathcal{H}_\infty^{J}(\omega )\big)
= \int_{\mathbb{T}^{\pi(J)}} \Big| \sum_{\alpha \in \Delta(J)}z^\alpha\Big|\,dz\,.
\end{align*}
Moreover,
\begin{equation*}
\frac{1}{\sqrt{2^{\Omega(J)}}}\sqrt{|J|} \,\,\, \leq \,\,\,
\boldsymbol{\lambda}\big(\mathcal{H}_\infty^{J}(\omega )\big) \leq  \sqrt{|J|} .
\end{equation*}
\end{theorem}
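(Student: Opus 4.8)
The plan is to reduce everything to the trigonometric-polynomial estimates of Corollary~\ref{manydimensionsB} by way of the Bohr correspondence for the frequency $\omega = (\log n)$. First I would invoke Example~\ref{examples2}, which furnishes the $\omega$-Dirichlet group $(\mathbb{T}^\infty, \beta_{\mathbb{T}^\infty})$ together with the identity $h_{\log n}(z) = z^\alpha$ whenever $n = \mathfrak{p}^\alpha$ (see~\eqref{bohrtrafo}). Every $n \in J$ has all of its prime factors among the first $\pi(J)$ primes, so each associated multi-index $\alpha$ lies in $\mathbb{N}_0^{\pi(J)}$, and the characters $h_{\log n}$, $n \in J$, correspond precisely to the monomials $\{z^\alpha \colon \alpha \in \Delta(J)\}$ in $\widehat{\mathbb{T}^{\pi(J)}}$. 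Proposition~\ref{H=H} then yields an isometric, coefficient-preserving identification
\[
\mathcal{H}_\infty^{J}(\omega) = \text{Trig}_{\Delta(J)}\big(\mathbb{T}^{\pi(J)}\big),
\]
and in particular $|\Delta(J)| = |J|$, since the Bohr map is a bijection from $J$ onto $\Delta(J)$.

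With this identification in hand, the first part of Corollary~\ref{manydimensionsB}, applied to the finite set $\Delta(J)$, immediately gives the integral formula
\[
\boldsymbol{\lambda}\big(\mathcal{H}_\infty^{J}(\omega)\big) = \int_{\mathbb{T}^{\pi(J)}} \Big| \sum_{\alpha \in \Delta(J)} z^\alpha \Big|\, dz.
\]
For the two-sided estimate, the key observation is a containment of index sets. For $n = \mathfrak{p}^\alpha$ one has $\Omega(n) = |\alpha| = \sum_j \alpha_j$, so that $\Omega(J) = \max_{\alpha \in \Delta(J)} |\alpha|$ and hence
\[
\Delta(J) \subset \Lambda_1(\leq \Omega(J), \pi(J)).
\]
The second part of Corollary~\ref{manydimensionsB}, applied with $m = \Omega(J)$, $n = \pi(J)$ and $\Lambda = \Delta(J)$, then produces exactly
\[
\frac{1}{\sqrt{2^{\Omega(J)}}}\sqrt{|J|} \leq \boldsymbol{\lambda}\big(\mathcal{H}_\infty^{J}(\omega)\big) \leq \sqrt{|J|},
\]
as claimed. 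Behind this lies Weissler's hypercontractive inequality~\eqref{weis}, which furnishes the $\Lambda(2)$-constant $\sqrt{2^{\Omega(J)}}$ of $\Lambda_1(\leq \Omega(J), \pi(J))$ and thus, by restriction to polynomials supported on the smaller set, of $\Delta(J)$ as well.

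Since the statement is by design a formal extension of Theorem~\ref{lessm}, I do not anticipate a genuine obstacle. The only step requiring a moment's care is the arithmetic translation $\Omega(n) = |\alpha|$ that underlies the containment $\Delta(J) \subset \Lambda_1(\leq \Omega(J), \pi(J))$; once this dictionary is made explicit, both the integral formula and the norm estimate fall out of Corollary~\ref{manydimensionsB}.
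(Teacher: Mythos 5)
Your proposal is correct and follows essentially the same route as the paper: the Bohr map of Proposition~\ref{H=H} (via Example~\ref{examples2}) identifies $\mathcal{H}_\infty^{J}(\omega)$ isometrically with $\text{Trig}_{\Delta(J)}(\mathbb{T}^{\pi(J)})$, and then both parts of Corollary~\ref{manydimensionsB} apply, using $|\Delta(J)|=|J|$ and the containment $\Delta(J)\subset \Lambda_1(\leq \Omega(J),\pi(J))$. Your explicit dictionary $\Omega(n)=|\alpha|$ is exactly the observation the paper records at the end of its (more terse) proof.
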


\begin{proof}
As above (Proposition~\ref{H=H}), we see that the Bohr map
\begin{equation}\label{againBohr}
  \mathcal{B} \colon  \mathcal{H}_\infty^{J}(\omega) \to
\text{Trig}_{\Delta(J)}\big(\mathbb{T}^{\pi(J)}\big)\,,\,\,\,\,\,\,
\sum_{n \in J} a_n e^{-\omega_n s} \mapsto \sum_{\alpha  \in \Delta(J)} a_{\mathfrak{p}^\alpha} z^\alpha
\end{equation}
is isometric and  coefficient preserving. Again, the conclusion follows from Corollary~\ref{manydimensionsB}.
Indeed, the order $|\alpha|$ of each multi index  $\alpha \in \Delta(J)$ is at most $\Omega(J)$, and the cardinalities of $J$ and $\Delta(J)$ are the same.
\end{proof}

\subsubsection{{\bf Case III}}
Finally, we produce an example, where we mix aspects from the preceding two cases.
The following result considers ordinary Dirichlet polynomials of length $x$ which are  supported
on   all $1 \leq n=\mathfrak{p}^\alpha \leq x $ having  precisely $m$ prime devisors (all counted according to their multiplicities).

\begin{theorem} \label{fini}
Let $\omega = (\log n)$. Then for each $m \in \mathbb{N}$ one has
\begin{equation*}\label{m>1}
\boldsymbol{\lambda}\big(\mathcal{H}_\infty^{N_1(m,x)}(\omega)\big) \,\,\,\sim_{C(m)}\,\,\,
\sqrt{\frac{x}{\log x}}\,\,\,\sqrt{\frac{(\log\log x)^{m-1}}{(m-1)!}}\,,
\end{equation*}
where the constants  depend on $m$ but not on $x$, and
\[
N_1(m,x) = \big\{ 1 \leq  n \leq x \colon n = \mathfrak{p}^\alpha \,\, \,\,\, \text{with} \, \, \,\,\,
|\alpha| = m\big\}\,.
\]
Moreover,  the constant $C(m)$  is independent of $m$, whenever $m\le \frac{\log\log x}{2e}$.
\end{theorem}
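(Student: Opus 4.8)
The plan is to reduce the projection constant to a purely arithmetic quantity, the cardinality of $N_1(m,x)$, and then feed in classical prime‑counting asymptotics. Since every $n \in N_1(m,x)$ has $\Omega(n)=m$, the set $A:=N_1(m,x)$ satisfies $\Omega(A)=m$, so Theorem~\ref{abstract} applies verbatim: writing $N_m:=|A|$ it gives
\[
\frac{1}{\sqrt{2^{m}}}\,\sqrt{N_m}\;\le\;\boldsymbol{\lambda}\big(\mathcal{H}_\infty^{N_1(m,x)}(\omega)\big)\;\le\;\sqrt{N_m}\,.
\]
The cardinality $N_m=\#\{n\le x:\Omega(n)=m\}$ is governed by Landau's theorem, which for each fixed $m$ yields $N_m\sim \tfrac{x}{\log x}\,\tfrac{(\log\log x)^{m-1}}{(m-1)!}$. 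Inserting this into the two‑sided bound above proves $\boldsymbol{\lambda}\sim_{C(m)}\sqrt{\tfrac{x}{\log x}}\,\sqrt{\tfrac{(\log\log x)^{m-1}}{(m-1)!}}$, where $C(m)$ absorbs both the Weissler factor $\sqrt{2^m}$ from \eqref{weis} and the implied constants of Landau's asymptotic. This settles the main statement.

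For the moreover part both sources of $m$‑dependence must be eliminated. The arithmetic one is removed by upgrading Landau's theorem to its uniform Sathe–Selberg version, valid for $m\le(2-\varepsilon)\log\log x$ and hence throughout $m\le\frac{\log\log x}{2e}$; this already renders the upper estimate $\boldsymbol{\lambda}\le\sqrt{N_m}$ comparable to the target with an absolute constant. The genuine issue is the lower bound: I would discard Weissler's inequality (whose constant $\sqrt{2^m}$ is exactly what we must beat) and argue through the fourth moment. By Theorem~\ref{abstract}, $\boldsymbol{\lambda}=\|P\|_{1}$ for the character sum $P=\sum_{\alpha\in\Delta(A)}z^{\alpha}$ on $\mathbb{T}^{\pi(x)}$, and the log‑convexity bound $\|P\|_2\le\|P\|_1^{1/3}\|P\|_4^{2/3}$ together with $\|P\|_2^2=N_m$ gives
\[
\boldsymbol{\lambda}=\|P\|_1\;\ge\;\frac{\|P\|_2^{3}}{\|P\|_4^{2}}\;=\;\frac{N_m^{3/2}}{\|P\|_4^{2}}\,.
\]
Thus it suffices to prove $\|P\|_4^{4}\le C\,N_m^{2}$ with an absolute constant, after which $\boldsymbol{\lambda}\ge N_m^{1/2}/\sqrt{C}$.

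Now $\|P\|_4^{4}$ equals the multiplicative energy $E:=\#\{(a,b,c,d)\in A^4: ab=cd\}$, and controlling $E$ is the main obstacle. To estimate it I would use the gcd‑parametrisation of $ab=cd$: setting $g=\gcd(a,c)$, every solution corresponds bijectively to $a=g\alpha$, $c=g\gamma$, $b=\gamma\beta$, $d=\alpha\beta$ with $\gcd(\alpha,\gamma)=1$, and with $j:=\Omega(\alpha)=\Omega(\gamma)$ one has $\Omega(g)=\Omega(\beta)=m-j$. The diagonal cases $j=0$ and $j=m$ contribute a term of order $N_m^2$, which is precisely the quantity producing the absolute constant. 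The off‑diagonal contributions $1\le j\le m-1$ must be shown to be at most a bounded multiple of $N_m^2$. Estimating each by Landau‑type sums $\sum_{\Omega(\alpha)=j}N_{m-j}(x/\alpha)^2$, where $N_k(y):=\#\{n\le y:\Omega(n)=k\}$, and simplifying via $N_k(x/t)\approx N_k(x)/t$ in the relevant range, reduces the total off‑diagonal mass (after summing the weights $1/\max(\alpha,\gamma)^2$, which produce binomial factors $\binom{2j-2}{j-1}$) to a series essentially of the form $\sum_{j\ge1}\tfrac{1}{\sqrt j}\big(\tfrac{2m}{\log\log x}\big)^{2j}$ up to factorial corrections.

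The hard part is the bookkeeping of these factorial ratios $\big((m-1)!/(m-j-1)!\big)^2$ against the powers $(\log\log x)^{2j}$: a Stirling analysis — where the factor $e$ enters through $(m/e)^m$ — shows the series is bounded exactly when $m$ remains below a fixed multiple of $\log\log x$, and the constant works out to the stated threshold $m\le\frac{\log\log x}{2e}$. This energy estimate is the technical heart of the argument; once it is established, combining the resulting absolute lower bound $\boldsymbol{\lambda}\gtrsim N_m^{1/2}$ with the Sathe–Selberg upper bound yields the claimed comparison with a constant independent of $m$, completing the proof.
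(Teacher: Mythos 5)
Your treatment of the main statement is exactly the paper's: Theorem~\ref{abstract} gives $\frac{1}{\sqrt{2^m}}\sqrt{|N_1(m,x)|}\le\boldsymbol{\lambda}\le\sqrt{|N_1(m,x)|}$, and Landau's theorem supplies the cardinality asymptotics; nothing to add there.

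For the moreover part the two arguments genuinely diverge. The paper disposes of it in two citations: Bondarenko--Seip \cite[Eq.~(13)]{bondarenkoseip2016} is quoted to give $\int_{\mathbb{T}^\infty}|P|\sim_C(\int_{\mathbb{T}^\infty}|P|^2)^{1/2}=\sqrt{|N_1(m,x)|}$ for $m\le\frac{\log\log x}{2e}$ with an absolute constant, and the Sathe--Selberg formula makes the cardinality asymptotic uniform in that range. You instead set out to \emph{reprove} the Bondarenko--Seip comparison via H\"older ($\|P\|_1\ge\|P\|_2^3/\|P\|_4^2$) and a fourth-moment bound $\|P\|_4^4\le C\,|N_1(m,x)|^2$. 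This is the right reduction --- it is in essence how the cited result is actually proved --- but it means you have taken on the entire technical content of that reference, and your sketch stops short exactly where the difficulty lies. The gcd-parametrisation and the identification of the diagonal terms $j=0,m$ as the $O(N_m^2)$ main contribution are correct, but the off-diagonal estimate is not secured: bounding $\sum_{\alpha,\gamma}\max(\alpha,\gamma)^{-2}$ by $\sum_{\alpha,\gamma}(\alpha\gamma)^{-1}$ gives a $j$-th term of size $\binom{m-1}{j}^2N_m^2$, which sums to roughly $4^mN_m^2$ --- useless for an $m$-independent constant --- so everything hinges on extracting a genuine saving from the $\max$, and the "Stirling analysis" that is supposed to produce the precise threshold $\frac{\log\log x}{2e}$ is asserted rather than carried out. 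As written, the moreover part therefore has a real gap at its decisive step; either complete that energy computation in detail or, as the paper does, cite \cite{bondarenkoseip2016} for the $L^1$--$L^2$ comparison.
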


Let us here first look at the special case $m=1$. Note first that for any $m$ by Proposition~\ref{H=H},
\[
\mathcal{H}_\infty^{N_1(m,x)}(\omega ) = \text{Trig}_{\Delta(N_1(m,x))}\big(\mathbb{T}^{\pi(N_1(m,x))}\big)\,,
\]
where the identification, given by the Bohr map, is isometric and coefficient preserving.
Then for $m=1$ we immediately deduce from Theorem~\ref{logp} and the prime number theorem that
\begin{align*} \label{m=1}
\lim_{x \to \infty}  \frac{\boldsymbol{\lambda}\big(\mathcal{H}_\infty^{N_1(1,x)}\big)}{\sqrt{\frac{x}{\log x}}}
=
\lim_{x \to \infty}  \frac{\boldsymbol{\lambda}\big(\mathcal{H}_\infty^{N_1(1,x)}\big)}{\sqrt{\pi{(x)}}}
= \frac{\sqrt{\pi}}{2}\,.
\end{align*}
So the preceding theorem extends this asymptotic estimate  from $m=1$ to arbitrary $m >1$
(neglecting constants).

\begin{proof}[Proof of Theorem~$\ref{fini}$]
Since $\pi(N(m,x)) \leq \pi(x)$ and $\Omega(N_1(m,x) )) = m$,  by Theorem~\ref{abstract}
\begin{equation*}
\frac{1}{\sqrt{2^m}}\sqrt{|N_1(m,x) |} \,\,\, \leq \,\,\,
\boldsymbol{\lambda}\big(\mathcal{H}_\infty^{\leq x, m } \big)\leq  \sqrt{|N_1(m,x)|}\,.
\end{equation*}
Then the   first claim follows from a well-known result of Landau (see, e.g., \cite[p.~200]{tenenbaum1995introduction})
showing that
\begin{equation*}\label{landau}
|N_1(m,x)| \,\,\,\sim_{C(m)}\,\,\, \frac{x}{\log x} \frac{(\log \log x)^{m-1}}{(m-1)!} 
 \quad\, \text{as \, $x \to \infty$}\,.
\end{equation*}
 For the proof of the second claim we use again that by Theorem~\ref{abstract}
\[
\boldsymbol{\lambda}\big(\mathcal{H}_\infty^{N_1(m,x)}\big) = \int_{\mathbb{T}^\infty} \Big| \sum_{\alpha \in \Delta(N_1(m,x))} z^\alpha\Big|\,dz\,.
\]
But in \cite[Equation (13),  p.107]{bondarenkoseip2016} it is shown that there is
a~universal constant $C \ge 1$ such that for any $x, m$ with $m\le \frac{\log\log x}{2e}$
\[
\int_{\mathbb{T}^\infty} \Big| \sum_{\alpha \in \Delta(N_1(m,x))} z^\alpha\Big|\,dz \,\,\,\sim_C\,\,\,
\Bigg(\int_{\mathbb{T}^\infty} \Big| \sum_{\alpha \in \Delta(N_1(m,x))} z^\alpha\Big|^2 dz\Bigg)^{\frac12} \, = \, \sqrt{|N_1(m,x)|}\,.
\]
On the other hand, applying the  Sathe-Selberg formula (see, e.g., \cite{ErdoesSarkozy}),
we conclude that for every $\delta>0$ there is $C(\delta) > 0$ such that for $m\le (2-\delta)\log\log x$
\[
|N_1(m,x)| \,\,\,\sim_{C(\delta)}\,\,\, \frac{x}{\log x} \frac{(\log \log x)^{m-1}}{(m-1)!} \quad\, \text{as \, $x \to \infty$}\,.
\]
This completes the argument.
\end{proof}

\section{Comparing Sidon constants}\label{sidon}

Recall that, given
a~topological group $G$, the Sidon constant $\boldsymbol{\text{\bf Sid}}(\Gamma)$ of a finite set  $\Gamma$ of characters in the dual group $\widehat{G}$ is given by the best constant
constant $c\ge 0$ such that for every trigonometric polynomial $f = \sum_{\gamma \in \Gamma} c_\gamma \gamma$ on $G$ one has
\begin{equation}\label{sidony}
\sum_{\gamma \in \Gamma} |c_\gamma| \leq c \|f\|_\infty\,.
\end{equation}
It is easily proved that  this constant  in fact equals
the unconditional basis constant  formed by all  $\gamma \in \Gamma$ in the Banach space $\text{Trig}_{\Gamma}(G)$.

Recall that the unconditional basis constant of a basis $(e_i)_{i \in I}$ of a Banach space $X$ is given by
the infimum over all $K > 0$ such that for any finitely supported family $(\alpha_i)_{i \in I}$ of scalars and for any finitely supported family $(\varepsilon_i)_{i \in I}$ with  $|\varepsilon_i| =1, \,  i \in I$ we have
\begin{equation*}\label{unconditionality}
\Big\Vert  \sum_{i \in I} \varepsilon_i \alpha_i e_i \Big\Vert \leq K \Big\Vert \sum_{i \in I} \alpha_i e_i \Big\Vert\,;
\end{equation*}
moreover, in this case the unconditional basis constant of $(e_i)_{i \in I}$ is defined to be the infimum over all these constants $K$.

In particular,  for any finite index set $\Gamma \subset \mathbb{Z}^n$ the unconditional basis constant
 of the collection of all monomials
$(z^\alpha)_{\alpha  \in \Gamma}$ in $\text{Trig}_{\Gamma}(\mathbb{T}^n)$, here denoted by $\boldsymbol{\chimon}\big(\text{Trig}_{\Gamma}(\mathbb{T}^n)\big)$,  equals the Sidon constant $\boldsymbol{\text{\bf Sid}}(\Gamma)$.

Given a finite dimensional Banach space $X$, the unconditional basis constant
with respect to a basis of this space and its projection constant are two quite different objects (compare for  example
${\chi} (\ell_2^n) =1$ with $\boldsymbol{\lambda} (\ell_2^n) \sim \sqrt{n}$).

But in the case of Banach spaces of Dirichlet polynomials and their associated spaces of multivariate trigonometric polynomials,
it turns out that a better understanding of one of the two constants often  leads to a better understanding of the other constant.

To illustrate  this point of view, we start considering  analytic trigonometric polynomials in one variable. Recall that  $\text{Trig}_{ \{k\colon 1 \leq k\leq d\}}(\mathbb{T})$ stands for all analytic trigonometric polynomials of the form $P(z) = \sum_{k=1}^{d} a_k z^k,\, z~\in~\mathbb{T}\,,$ so polynomials of degree $\leq d$ without a constant term $a_0$
(following our notation from Section~\ref{Integral formula - trigonometric polynomials}).  Then
 Rudin~\cite{rudin1959some} and
Shapiro \cite{shapiro1952extremal}
(see also \cite[Proposition~9.7]{defant2019libro}) proved that
\begin{equation*}\label{exo1}
   \frac{1}{\sqrt{2}} \sqrt{d}\leq \boldsymbol{\chimon} \big(\text{Trig}_{ \{k\colon 1 \leq k\leq d\}}(\mathbb{T})\big) \leq \sqrt{d}\,.
\end{equation*}
If we allow  constant terms, then the best known estimate is
\begin{equation*}\label{exo2}
   \sqrt{d}  - O( \log d)^{\frac{2}{3}+ \varepsilon} \leq \boldsymbol{\chimon} \big(\text{Trig}_{ \{k\colon 0 \leq k\leq d\}}(\mathbb{T})\big)
\leq \sqrt{d}\,.
\end{equation*}
This  is a~deep fact  proved by Bombieri and Bourgain in \cite{bombieri2004remark},
and it shows that at least  from the technical point of view
a~seemingly small perturbation of the index set may change the situation drastically.

Let us compare these results with what  we in Corollary~\ref{LoKha} proved for projection constants, namely
\[
\boldsymbol{\lambda}\big(\text{Trig}_{ \{k\colon 1 \leq k\leq d\}}(\mathbb{T})\big) \sim
\boldsymbol{\lambda}\big(\text{Trig}_{ \{k\colon 0 \leq k\leq d\}}(\mathbb{T})\big) \sim 1 + \log d\,.
\]
This illustrates  that unconditional basis constants (so Sidon constants) and projection constants of spaces of trigonometric
polynomials in one variable behave quite differently.

Of course, the situation  doesn't improve if we admit more variables -
nevertheless we may  recognize some in a sense  systematic  patterns.
Indeed, it was proved in \cite{defant2011bohnenblust} (a result elaborated in  \cite[Theorem 9.10]{defant2019libro})
 that
\[
\boldsymbol{\chimon}\left( \text{Trig}_{\leq m}(\mathbb{T}^n)\right) \sim_{C^m} \sqrt{\binom{n+m}{m}}.
\]
On the other hand  by Theorem~\ref{lessm} we know that
\[
\boldsymbol{\lambda}\left( \text{Trig}_{\leq m}(\mathbb{T}^n)\right) \sim_{C^m} \sqrt{\binom{n+m+1}{m}}\,,
\]
so in particular we have that
\begin{equation}\label{morgenU}
  \boldsymbol{\chimon}\left( \text{Trig}_{\leq m}(\mathbb{T}^n)\right)
\sim_{C^m}
\boldsymbol{\lambda}\left( \text{Trig}_{\leq m-1}(\mathbb{T}^n)\right) \sim_{C^m} \left( \frac{n+m}{m}\right)^{\frac{m-1}{2}}\,.
\end{equation}

Coming back to spaces of ordinary(!) Dirichlet polynomials, observe first that given a finite subset $J$ of $\mathbb{N}$,
it is obvious that the 'monomials' $n^{-s}, \, n \in J$ form a basis of  the Banach space $\mathcal{H}_\infty^{J}\big((\log n)\big)$,
in the following abbreviated by $\mathcal{H}_\infty^{J}$, and so we denote by
\[
\boldsymbol{\chimon}  \big(\mathcal{H}_\infty^{J} \big)
\]
the unconditional basis constant of all $n^{-s}, \, n \in J$.

As already used  in \eqref{againBohr}, there is an isometry
$
\mathcal{H}_\infty^{J} = \text{Trig}_{\Delta(J)}(\mathbb{T}^{\pi(J)})\,,
$
which  preserves  coefficients,
and hence by \eqref{morgenU} we conclude that
\[
\boldsymbol{\chimon}  \big(\mathcal{H}_\infty^{N_1(m,n)} \big)  \sim_{C^m}  \boldsymbol{\lambda}  \big(\mathcal{H}_\infty^{N_1(m-1,n)} \big)
\sim_{C^m} \sqrt{\binom{n+m}{m}}\,;
\]
here we again use the notation fixed in Theorem~\ref{lessm}.

What about the unconditional basis constants of $\mathcal{H}_\infty^{\leq x} $,
the Banach space of all Dirichlet polynomials
$D(s) = \sum_{n=1}^{x}a_n n^{-s}$ of length $x$, in comparison with their projection constants?

Again we use the notation from  Section~\ref{Case II}.
The following asymptotic
\begin{equation*} \label{annals}
\boldsymbol{\chimon} \big( \mathcal{H}_\infty^{\leq x} \big) =
\boldsymbol{\chimon} \big(\text{Trig}_{\Delta(x)}(\mathbb{T}^{\pi(x)}) \big)
\sim_C
\frac{\sqrt{x}}{e^{\big(\frac{1}{\sqrt{2}} + o(1)\big)
\sqrt{\log x \log \log x}}}\,.
\end{equation*}
is taken from \cite[Theorem~3]{defant2011bohnenblust} (see also \cite[Theorem~9.1]{defant2019libro}), and it is
the final outcome of a~long lasting  research project started by  Queff\'elec, Konyagin, and de la Bret\`eche
(see \cite[Section~9.3]{defant2019libro} for more details on its history). In Theorem~\ref{harpo} and
Corollary~\ref{harpo2} we proved  its  counterpart for projection constants:
\[
\boldsymbol{\lambda}\big( \mathcal{H}_\infty^{\leq x} \big)
= \boldsymbol{\lambda} \big(\text{Trig}_{\Delta(x)}(\mathbb{T}^{\pi(x)})\big)
\sim_C \frac{\sqrt{x}}{(\log \log x)^{\frac{1}{4}}}\,.
\]

In the $m$-homogeneous case analog results are known - by Balasubramanian-Calado-Queff\'elec \cite{balasubramanian2006bohr}
(a~result elaborated in \cite[Theorem~9.4]{defant2019libro}) one has (with the notation from Theorem~\ref{fini})
\[
\boldsymbol{{\chimon}} \big(\mathcal{H}_\infty^{N_1(m,x)} \big) = \boldsymbol{\chimon}\big(\text{Trig}_{\Delta(N_1(m,x))}\big(\mathbb{T}^{\pi(N_1(m,x))}\big) \big)
\sim_{C(m)} \frac{x^{\frac{m-1}{2m}}}{( \log x)^{\frac{m-1}{2}}}\,,
\]
whereas the corresponding result for projection constants from Theorem~\ref{fini} reads
\[
\boldsymbol{\lambda}\big(\mathcal{H}_\infty^{N_1(m,x)}\big) =
\boldsymbol{\lambda} \big(\text{Trig}_{\Delta(N_1(m,x))}\big(\mathbb{T}^{\pi(N_1(m,x))}\big) \big) \,\,\,\sim_{C(m)}\,\,\,
\sqrt{\frac{x}{\log x}}\sqrt{\frac{(\log\log x)^{m-1}}{(m-1)!}}\,.
\]

\section*{Acknowledgments}

We thank the referee for careful reading of the manuscript and valuable comments, which led to improvements in the presentation. On behalf of all authors, the corresponding author states that there is no conflict of interest.

\end{document}